\title{Exponent Preserving Subgroups\\of the Finite Simple Groups}
\author{Andrea Pachera \\
	\multicolumn{1}{p{.5\textwidth}}{\centering\small\texttt{pak.ska@gmail.com}}}
	\setlist[itemize]{labelindent=1.5\parindent,leftmargin=*}
	\setlist[enumerate]{label=(\roman*),font=\normalfont}
\renewcommand{\sp}[2]{\mathrm{Sp}_{#1}(#2)}
\newcommand{\psp}[2]{\mathrm{PSp}_{#1}(#2)}
\newcommand{\go}[3]{\mathrm{GO}^{#1}_{#2}(#3)}
\newcommand{\so}[3]{\mathrm{SO}^{#1}_{#2}(#3)}
\newcommand{\om}[3]{\Omega^{#1}_{#2}(#3)}
\newcommand{\pom}[3]{\mathrm{P}\Omega^{#1}_{#2}(#3)}
\renewcommand{\k}[3]{\mathrm{K}^{#1}_{#2}(#3)}
\renewcommand{\l}[2]{\mathrm{L}_{#1}(#2)}
\newcommand{\gl}[2]{\mathrm{GL}_{#1}(#2)}
\renewcommand{\sl}[2]{\mathrm{SL}_{#1}(#2)}
\newcommand{\psl}[2]{\mathrm{PSL}_{#1}(#2)}
\newcommand{\m}[1]{\mathrm{M}_{#1}}
\newcommand{\uni}[2]{\mathrm{U}_{#1}(#2)}
\newcommand{\mcl}{\mathrm{McL}}
\newcommand{\hs}{\mathrm{HS}}
\newcommand{\co}[1]{\mathrm{Co}_{#1}}
\newcommand{\ord}[2]{\mathrm{ord}_{#1}(#2)}
\newcommand{\aut}{\mathrm{Aut}\,}
\renewcommand{\epsilon}{\varepsilon}
\renewcommand{\phi}{\varphi}
\newcommand{\f}[1]{\mathbb{F}_{#1}}
\newcommand{\twr}{\wr_{\text{tw}}}
\newcommand{\psylow}{Sylow \(p\text{-subgroup}\) }
\newcommand{\psylows}{Sylow \(p\text{-subgroups}\) }
\newcommand{\dsylow}{Sylow \(2\text{-subgroup}\) }
\newcommand{\dsylows}{Sylow \(2\text{-subgroups}\) }
\newcommand{\psyloww}{Sylow \(p\text{-subgroup}\)}
\newcommand{\dsyloww}{Sylow \(2\text{-subgroup}\)}
\newcommand{\dimn}[1]{#1\text{-dimensional}}
\newcommand{\br}{\par\bigskip}
\DeclarePairedDelimiter{\size}{\lvert}{\rvert}
\DeclarePairedDelimiter{\gen}{\langle}{\rangle}
\theoremstyle{plain}
\newtheorem{theorem}{Theorem}[section]
\newtheorem{proposition}[theorem]{Proposition}
\theoremstyle{definition}
\newtheorem*{theorem-n}{Theorem}
\newtheorem*{notation}{Notation}
\newtheorem*{help}{Acknowledgments}
\begin{document}

\maketitle
\begin{abstract}
	Given a group \(G\) denote with \(\exp(G)\) its exponent, which is the least common multiple of the order of its elements. In this paper we solve the problem of finding the finite simple groups having a proper subgroup with the same exponent. For each \(G\) with this property we will give an explicit example of \(H<G\) with \(\exp(G)=\exp(H)\).
\end{abstract}
\section{Introduction}

Given a group \(G\), denote with \(\pi(G)\) the set of prime divisors of \(\size{G}\), with \(\Gamma(G)\) its prime graph, and with \(\exp(G)\) its exponent, i.e. the least common multiple of the orders of its elements.

In the recent years a series of problems have been investigated, related to the existence of a suitable subgroup \(H<G\) preserving some prescribed property of \(G\). For example, Lucchini, Morigi, and Shumyatsky~\cite{al} proved that if \(G\) is finite then it always contains a 2-generated subgroup \(H\) with \(\pi(G)=\pi(H)\), and a 3-generated subgroup \(H\) with \(\Gamma(G)=\Gamma(H)\); Covato~\cite{covato} extended these results to profinite groups; Burness and Covato~\cite{burness} showed which finite simple groups \(G\) contain a proper subgroup \(H\) with \(\Gamma(G)=\Gamma(H)\).

The aim of this work is to find the finite simple groups which have a proper subgroup with the same exponent, and we prove the following result:
\begin{theorem-n}\label{maintheorem}
	The finite simple groups which contain a proper subgroup with the same exponent are the following:
	\begin{enumerate}
		\item the alternating groups \(A_n\) with \(n\ge5\), except when \(n=10\), \(n=p^r\) with \(p\) odd prime, or \(n=p_f+1\) where \(p_f\) is a Fermat prime;
		\item the symplectic groups \(\psp{4}{q}\), except when \(q=3^k\) or \(q=2\);
		\item the symplectic groups \(\psp{2m}{q}\) with \(m\) and \(q\) even, except when \(m=q=2\);
		\item the orthogonal groups \(\pom{}{2m+1}{q}\) with \(m\ge4\) even, except when \(p^a=2m-1\) for some \(a\), where \(q=p^k\);
		\item the orthogonal groups \(\pom{+}{2m}{q}\) with \(m\ge4\) even;
		\item the Mathieu groups \(\m{12},\,\m{24}\);
		\item the Higman-Sims group \(HS\).
	\end{enumerate}
\end{theorem-n}
The proof is based on~\cite{doctabella}, where Table 10.7 can be used to obtain \autoref{table} below, which contains a list of all the possible pairs \((G,M)\) where \(M\) is a maximal subgroup of the finite simple group \(G\) with \(\pi(M)=\pi(G)\). For each of these pairs \((G,M)\) we check whether \(\exp(M)=\exp(G)\), organizing our discussion in the following way:
\begin{enumerate}
	\item in \autoref{casidifficili} we solve the problem for the four infinite families of classical groups of Lie type (a)-(d). Notice that this requires the study of the \psylows of the groups involved, which are discussed in \autoref{capitolopsylow}, in order to compare the exponents of \(G\) and \(M\);
	\item the alternating group (e) is studied in \autoref{alternante};
	\item in the other cases, the possibilities for \(M\) are explicitly listed, so it's easy to make a direct computation: they are studied in \autoref{casifacili}.
\end{enumerate}
\begin{table}[h]
	\centering
	\begin{tabular}{cl@{\hskip 5ex}c@{\hskip 5ex}c}
		\toprule
		&\(G\)	&	\(M\)	&	Remarks	\\
		\midrule
		(a)&\(\psp{2m}{q}\)	&	\(M\unrhd\om{-}{2m}{q}\)	&	\(m,\,q\) even	\\
		(b)&\(\pom{}{2m+1}{q}\)	&	\(M\unrhd\om{-}{2m}{q}\)	&	\(m\) even, \(q\) odd	\\
		(c)&\(\pom{+}{2m}{q}\)	&	\(M\unrhd\om{}{2m-1}{q}\)	&	\(m\) even	\\
		(d)&\(\psp{4}{q}\)	&	\(M\unrhd\psp{2}{q^2}\)	&	\\
		(e)&\(A_c\)	&	\(A_k\unlhd M\le S_k\times S_{c-k}\)	&	\\
		&\(A_6\)	&	\(\l{2}{5}\)	&	\\
		&\(\l{6}{2}\)	&	\(P_1,\,P_5\)	&	\\
		&\(\uni{3}{3}\)	&	\(\l{2}{7}\)	&	\\
		&\(\uni{3}{5}\)	&	\(A_7\)	&	\\
		&\(\uni{4}{2}\)	&	\(2^4\rtimes A_5,\,S_6\)	&	\\
		&\(\uni{4}{3}\)	&	\(\l{3}{4},\,A_7\)	&	\\
		&\(\uni{5}{2}\)	&	\(\l{2}{11}\)	&	\\
		&\(\uni{6}{2}\)	&	\(\m{22}\)	&	\\
		&\(\psp{4}{7}\)	&	\(A_7\)	&	\\
		&\(\sp{6}{2}\)	&	\(S_8\)	&	\\
		&\(\pom{+}{8}{2}\)	&	\(P_1,\,P_3,\,P_4,\,A_9\)	&	\\
		&\(G_2(3)\)	&	\(\l{2}{13}\)	&	\\
		&\(\prescript{2}{}{F_4(2)'}\)	&	\(\l{2}{25}\)	&	\\
		&\(\m{11}\)	&	\(\l{2}{11}\)	&	\\
		&\(\m{12}\)	&	\(\m{11},\,\l{2}{11}\)	&	\\
		&\(\m{24}\)	&	\(\m{23}\)	&	\\
		&\(\hs\)	&	\(\m{22}\)	&	\\
		&\(\mcl\)	&	\(\m{22}\)	&	\\
		&\(\co{2}\)	&	\(\m{23}\)	&	\\
		&\(\co{3}\)	&	\(\m{23}\)	&	\\
		\bottomrule
	\end{tabular}
	\caption{The pairs \((G,M)\) with \(\pi(G)=\pi(M)\).}\label{table}
\end{table}
\begin{notation}
	The notation is fairly standard, with the classical groups denoted in the following way:
	\begin{gather*}
	\l{n}{q}=\psl{n}{q}\le\sl{n}{q}\le\gl{n}{q}\\
	\psp{2n}{q}\le\sp{2n}{q}\\
	\pom{\eta}{n}{q}\le\om{\eta}{n}{q}\le\so{\eta}{n}{q}\le\go{\eta}{n}{q}
	\end{gather*}
	where \(\eta=\pm1\) if \(n\) is even, and it's omitted if \(n\) is odd. It may be omitted also when referring to an unspecified orthogonal group.
	
	\(\k{}{n}{q}\) denotes the kernel of the spinor norm in \(\go{}{n}{q}\), so that \(\k{}{n}{q}\cap\so{}{n}{q}=\om{}{n}{q}\).
	
	\(P_i\) denotes a parabolic subgroup stabilizing a \(\dimn{i}\) subspace.
	
	\(A\twr B\) denotes a twisted wreath product as described in~\cite{wong}.
	
	\(\ord{p}{q}\) denotes the multiplicative order, i.e. it's the minimum \(e\) such that \(q^e\equiv1\bmod p\).
	
	\(a^b\parallel c\) means that \(a^b\mid c\) but \(a^{b+1}\notdivides c\).
\end{notation}
\section{The \texorpdfstring{\psylows}{Sylow p-subgroups}of the classical groups}\label{capitolopsylow}

The four infinite families to study involve the groups \(\psp{2m}{q}\), \(\pom{}{2m+1}{q}\), \(\pom{+}{2m}{q}\), and \(\om{-}{2m}{q}\).

In order to evaluate their exponents, we study their \psylows to get the corresponding power of \(p\) in the factorization of the exponent. In particular, the following hold:
\begin{proposition}
	Take \(G=\sp{2c}{q}\) or \(\go{}{d}{q}\), \(p\) odd with \(p\notdivides q\), \(e=\ord{p}{q}\), and \(p^r\parallel q^e-1\). 	A \psylow of \(G\) is isomorphic to a \psylow of \(\gl{n}{q}\) (if \(e\) is even) or \(\sp{2n}{q}\) (if \(e\) is odd) for some \(n\). In particular:
	\begin{align}
	\label{recapgl}		\exp_p(\gl{n}{q})&=p^{r+v} & ep^v&\le n<ep^{v+1},\\
	\label{recapsp1}	\exp_p(\sp{2n}{q})&=p^{r+v} & 2ep^v&\le 2n<2ep^{v+1}.
	\end{align}
	Since \(p\) is odd, then \(\exp_p(\psp{2c}{q})=\exp_p(\sp{2c}{q})\) and \(\exp_p(\pom{}{d}{q})=\exp_p(\go{}{d}{q})\).
\end{proposition}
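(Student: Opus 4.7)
The plan is to dispose of $\gl{n}{q}$ first and then reduce the symplectic and orthogonal cases to it by an explicit block decomposition of the natural module. Since $p$ is odd, projection is harmless: the centers of $\sp{2c}{q}$ and $\go{}{d}{q}$ are $2$-groups (or divide $\gcd(q^m\pm1,4)$), so $\exp_p$ is unaffected by the quotient and the last sentence of the proposition is immediate.

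For $\gl{n}{q}$, a Singer cycle in $\gl{e}{q}$ gives a cyclic torus of order $q^e-1$ with $p$-part cyclic of order $p^r$. Writing $n=em+s$ with $0\le s<e$, the natural direct-sum embedding $\gl{e}{q}\wr S_m\hookrightarrow\gl{n}{q}$ captures the entire $p$-part of $\size{\gl{n}{q}}$, because lifting the exponent yields $v_p(q^{ek}-1)=r+v_p(k)$ for every $k\ge1$. Hence a Sylow $p$-subgroup of $\gl{n}{q}$ has the form $C_{p^r}\wr P_m$, where $P_m$ is a Sylow $p$-subgroup of $S_m$. When $p^v\le m<p^{v+1}$, $P_m$ is an iterated wreath product $C_p\wr\cdots\wr C_p$ with $v$ factors, and an induction on $v$ using that the ``top level'' rotation acts as a full cycle gives $\exp(C_{p^r}\wr P_m)=p^{r+v}$, which is exactly~\eqref{recapgl} via $ep^v\le n<ep^{v+1}\iff p^v\le m<p^{v+1}$.

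For $G=\sp{2c}{q}$ or $\go{}{d}{q}$, one decomposes the natural module orthogonally into $m$ isometric nondegenerate blocks of dimension $2e$ (if $e$ is odd) or $e$ (if $e$ is even). In both cases the isometry group of a single block contains a Singer-type cyclic subgroup whose $p$-part is $C_{p^r}$: for $e$ odd, via the Singer cycle of $\gl{e}{q}\le\sp{2e}{q}$; for $e$ even, via the ``twisted'' torus of order $q^{e/2}+1$ sitting inside the block. An LTE count on the factors $q^{2i}-1$ (respectively $q^i\pm1$) appearing in $\size{G}$ shows that the stabilizer of the decomposition already carries a full Sylow $p$-subgroup of $G$, of shape $C_{p^r}\wr P_m$, which is the isomorphism type of a Sylow $p$-subgroup of $\sp{2em}{q}$ (if $e$ is odd) or $\gl{em}{q}$ (if $e$ is even). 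The exponent then follows from the $\gl$ computation above, with formula~\eqref{recapsp1} obtained from~\eqref{recapgl} after the relabeling $n\mapsto 2n$ adapted to the symplectic reference group.

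The main obstacle is the combinatorial verification in the orthogonal case: one must check that the proposed block decomposition actually lies inside $\go{}{d}{q}$ rather than merely inside $\gl{d}{q}$, and that the $\pm$ types and residual dimension $s<e$ of the leftover block combine to give the prescribed sign and dimension of $G$. Once this compatibility is settled, both the structural identification of the Sylow $p$-subgroup and the exponent formulas reduce uniformly to the $\gl{n}{q}$ analysis and the wreath product computation.
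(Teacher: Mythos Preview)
Your argument is essentially the paper's (which follows Weir): a Singer-type cyclic $C_{p^r}$ in a minimal block, wreathed with a Sylow of the symmetric group permuting the blocks, with Sylowness verified by an order count equivalent to your LTE computation. The paper organizes the $e$ even/odd split slightly differently---for $e$ even it simply records that a Sylow $p$-subgroup of $\sp{2n}{q}$ is already one of $\gl{2n}{q}$---and it disposes of your flagged orthogonal obstacle with the one-line reduction that a Sylow of $\so{\epsilon}{2m}{q}$ coincides with a Sylow of $\so{}{2m+1}{q}$ when $p\mid q^m-\epsilon$ and with a Sylow of $\so{}{2m-1}{q}$ otherwise.
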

\begin{proposition}
	Take \(q\) odd, \(s\) such that \(2^{s+1}\parallel q^2-1\), and \(r_t\) such that \(2^{r_t}\le n<2^{r_t+1}\), where \(n=2m\) or \(n=2m+1\) is the degree of the group. Then:
	\begin{align}
	\label{recappsp2}	\exp_2(\psp{2m}{q})&=
	\begin{dcases*}
	2^{s+r_t-1} & if \(m\ne2^k\)	\\
	2^{s+r_t-2} & if \(m=2^k\)
	\end{dcases*}
	\\
	\label{recapom1}	\exp_2(\om{}{2m+1}{q})=\exp_2(\pom{}{2m+1}{q})&=
	\begin{dcases*}
	2^{s+r_t-1} & if \(m\ne2^k\)	\\
	2^{s+r_t-2} & if \(m=2^k\)						
	\end{dcases*}
	\\
	\label{recapom2}	\exp_2(\om{\eta}{2m}{q})=\exp_2(\pom{\eta}{2m}{q})&=
	\begin{dcases*}
	2^{s+r_t-1} & if \(m\ne2^k\)	\\
	2^{s+r_t-2} & if \(m=2^k\)
	\end{dcases*}(m>2)
	\end{align}
\end{proposition}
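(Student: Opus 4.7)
The plan is to realise a Sylow 2-subgroup \(P\) of each of the three classical groups as an explicit iterated wreath product, compute \(\exp(P)\) from this structure, and then analyse the effect of quotienting by the centre. First, for each of \(\sp{2m}{q}\), \(\om{}{2m+1}{q}\), and \(\om{\eta}{2m}{q}\) I would fix an orthogonal decomposition of the natural module \(V\) into 2-dimensional non-degenerate subspaces of the appropriate type (with a 1-dimensional remainder in the odd-orthogonal case), giving a subgroup of the form \(H \wr S_m\) where \(H\) is the corresponding rank-1 group. Using lifting the exponent in the identity \(v_2(q^{2i}-1) = (s+1) + v_2(i)\), the 2-part of \(\size{H \wr S_m}\) matches that of \(\size{G}\), so a Sylow 2-subgroup of \(H \wr S_m\) is already Sylow in \(G\). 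Concretely \(P \cong P_H \wr Q_m\), where \(P_H\) is a generalised quaternion or dihedral group of exponent \(2^{s+1}\) and \(Q_m \in \mathrm{Syl}_2(S_m)\) is an iterated \(C_2\)-wreath of depth \(\lfloor\log_2 m\rfloor\).

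Next, I would compute \(\exp(P)\) by iterating the identity \(\exp(A \wr C_2) = 2\exp(A)\), which follows from the observation that \(((a,1)\sigma)^2 = (a,a)\) has order \(\mathrm{ord}(a)\). Applied up the whole tower this gives \(\exp(P) = 2^{s+1}\cdot 2^{\lfloor\log_2 m\rfloor}\); comparing with the definition of \(r_t\), this simplifies uniformly to \(2^{s+r_t}\).

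Finally, I would pass to the projective quotient. In the odd-orthogonal case the centre is trivial (since \(-I \notin \so{}{2m+1}{q}\) when the dimension is odd), so \(\exp_2(\om{}{2m+1}{q}) = \exp_2(\pom{}{2m+1}{q})\) follows automatically; otherwise \(Z = \{\pm I\}\) has order 2. A maximum-order element \(x \in P\) has the form \((a_1,\dots,a_m)\tau\) with \(a_1\) of order \(2^{s+1}\) in \(P_H\) and \(\tau \in Q_m\) of maximal order \(2^{r_t-1}\). Iterating wreath-squaring, one finds \(x^{\exp(P)/2} = \mathrm{diag}(\epsilon_1,\dots,\epsilon_m)\) with signs \(\epsilon_i\) determined by the orbit structure of \(\tau\). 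When \(m = 2^k\) the permutation \(\tau\) is a single cycle of length \(m\) and all \(\epsilon_i = -1\), so \(x^{\exp(P)/2} = -I \in Z\), costing one factor of 2; a further analysis shows every element of \(P\) whose image in \(P/Z\) would have order \(2^{s+r_t-1}\) has its square already in \(Z\), costing another factor and yielding \(2^{s+r_t-2}\). When \(m \neq 2^k\) some orbit of \(\tau\) has odd size, and one can pick \(x\) so that \(x^{\exp(P)/2} \neq -I\), yielding \(2^{s+r_t-1}\).

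The main obstacle lies in this last step: ruling out elements of \(P\) whose image in \(P/Z\) has larger order than claimed. The naive estimate only gives \(\exp_2(G/Z) \ge \exp(P)/2\), so in the power-of-2 case one must actually classify all elements of \(P\) of order \(\exp(P)\) or \(\exp(P)/2\) and verify they all collapse in the quotient. Minor adjustments to the base group \(P_H\) (accounting for the sign \(\eta\) in the rank-1 orthogonal wreath base) then cover \(\om{\eta}{2m}{q}\) for \(m > 2\); the excluded case \(m=2\) is degenerate precisely because the wreath tower has no non-trivial top layer.
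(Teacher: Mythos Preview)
Your overall strategy---realise a Sylow \(2\)-subgroup as an iterated wreath product over a rank-one base, read off its exponent, then track what is lost when passing to the simple group---is exactly what the paper does for \(\sp{2m}{q}\). But there are two genuine problems.

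First, a numerical slip that happens to self-correct. The base group \(P_H\) (generalised quaternion for \(\sp{2}{q}\), dihedral for \(\go{\eta}{2}{q}\)) has \emph{order} \(2^{s+1}\) but \emph{exponent} \(2^{s}\). With the right value, \(\exp(P)=2^{s}\cdot 2^{\lfloor\log_2 m\rfloor}=2^{s+r_t-1}\), and in the symplectic case the quotient by \(\langle -I\rangle\) costs exactly \emph{one} factor of \(2\) when \(m=2^{k}\): every maximum-order element \(x\in W_{r}=W_{r-1}\wr C_2\) satisfies \(x^{2^{s+r_t-2}}=-I\). Your ``further analysis'' producing a second lost factor is spurious; it only looks necessary because of the off-by-one in \(\exp(P_H)\).

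Second, and more seriously, for the orthogonal groups your wreath product \(P_H\wr Q_m\) is a Sylow \(2\)-subgroup of \(\so{}{2m+1}{q}\) (respectively \(\go{\eta}{2m}{q}\)), \emph{not} of \(\om{}{2m+1}{q}\) or \(\om{\eta}{2m}{q}\). The step you are missing is the restriction to the kernel of the spinor norm, an index-\(2\) subgroup that is not a direct factor. Your remark that ``the centre is trivial, so \(\om{}{2m+1}{q}=\pom{}{2m+1}{q}\) automatically'' is correct but irrelevant: you have computed \(\exp_2(\so{}{2m+1}{q})=2^{s+r_t-1}\), and without the centre there is nothing in your argument to produce the drop to \(2^{s+r_t-2}\) when \(m=2^{k}\). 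That drop comes precisely from the spinor norm: when \(m=2^{k}\), every element of \(P\) of order \(2^{s+r_t-1}\) has nontrivial spinor norm, so none of them lie in \(\om{}{}{}\). Establishing this is the real content of the orthogonal part of the proposition. The paper does it by invoking Wong's description of the Sylow \(2\)-subgroups of \(\om{}{n}{q}\) and \(\k{}{n}{q}\) as \emph{twisted} wreath products (the twisting is exactly what encodes the spinor-norm condition), and then computing exponents inside those. An ordinary wreath product over \(\om{\eta}{2}{q}\) will not work either, since \(\bigl(\om{\eta}{2}{q}\bigr)^{m}\rtimes Q_m\) is far too small to be Sylow in \(\om{}{2m+1}{q}\).
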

\begin{proposition}
	Take \(p\mid q\), and \(G\) a symplectic or orthogonal group over the field \(\f{q}\). Then
	\begin{equation}\label{pq}
	\exp_p(G)=\min\set{p^a\mid p^a>c-1},
	\end{equation}
	where \(c\) is \(2m\) if \(G=\mathrm{(P)}\sp{2m}{q}\) or \(\mathrm{(P)}\om{}{2m+1}{q}\), and \(2m-2\) if \(G=\mathrm{(P)}\om{\pm}{2m}{q}\).
\end{proposition}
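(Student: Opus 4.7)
My plan is to identify a Sylow $p$-subgroup $U$ of $G$ with the unipotent radical of a Borel subgroup, compute the order of any $u\in U$ from its Jordan type via the Freshman's-dream identity in characteristic $p$, and then identify the largest Jordan block size attainable in each family. Since $p\mid q$, the defining characteristic of $G$ equals $p$, so the unipotent radical $U$ of a Borel subgroup of $G$ is a Sylow $p$-subgroup of $G$; in a basis adapted to the defining form, $U$ consists precisely of the upper unitriangular matrices lying in $G$. Every $u\in U$ can be written as $u=I+N$ with $N$ nilpotent, and in characteristic $p$ we have $u^{p^{a}}=(I+N)^{p^{a}}=I+N^{p^{a}}$. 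Therefore $u$ has order $\min\{p^{a}:p^{a}\ge\nu(u)\}$, where $\nu(u)$ is the nilpotency index of $N$, which equals the size of the largest Jordan block of $u$. Taking the supremum over $U$ one obtains $\exp_{p}(G)=\min\{p^{a}:p^{a}\ge k_{\max}(G)\}$, where $k_{\max}(G)$ is the largest Jordan block size occurring in a unipotent element of $G$.

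The next step is to compute $k_{\max}$ for each family. In $\sp{2m}{q}$ a regular unipotent is a single Jordan block of size $2m$, so $k_{\max}=2m$; in $\om{}{2m+1}{q}$ (necessarily in odd characteristic) a regular unipotent is a single Jordan block of size $2m+1$; and in $\om{\pm}{2m}{q}$ the combinatorial restrictions on Jordan types of orthogonal unipotents (even parts with even multiplicity in odd characteristic, and a finer constraint in characteristic $2$) forbid a single block of size $2m$, forcing $k_{\max}\le 2m-1$, with equality in odd characteristic realised by type $(2m-1,1)$. It then remains to rewrite $\min\{p^{a}:p^{a}\ge k_{\max}\}$ as $\min\{p^{a}:p^{a}>c-1\}=\min\{p^{a}:p^{a}\ge c\}$. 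This is immediate for $\sp{2m}{q}$ with $c=k_{\max}=2m$. For $\om{}{2m+1}{q}$ with $c=2m$ and $k_{\max}=2m+1$ one uses that $2m$ is even, hence never a power of odd $p$, so the two minima coincide. For $\om{\pm}{2m}{q}$ with $c=2m-2$ and $k_{\max}\le 2m-1$ the same parity argument disposes of the odd-$p$ case, while characteristic $2$ is handled by a direct verification from the unipotent classification.

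Finally, descent from $\sp{2m}{q}$ or $\om{}{n}{q}$ to the projective quotients $\psp{2m}{q}$ and $\pom{}{n}{q}$ preserves $\exp_{p}$, because the centre has order coprime to $p$ (or is trivial when $p=2$). The main technical obstacle is the careful analysis of unipotent classes in $\om{\pm}{2m}{q}$ in characteristic $2$, where the Jordan type parametrization is subtler than in odd characteristic and one has to pin down the correct maximum block size in order to validate the formula with $c=2m-2$; once $k_{\max}$ is in hand, everything else reduces to an elementary comparison of thresholds across powers of $p$.
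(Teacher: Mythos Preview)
Your approach is essentially the mechanism underlying the result the paper cites rather than proves: the paper simply invokes Testerman's theorem (with Duckworth's extension to bad primes) that the Sylow $p$-subgroup in defining characteristic has exponent $\min\{p^a:p^a>h-1\}$ with $h$ the Coxeter number, and then reads off $h=2m$ for types $B_m,C_m$ and $h=2m-2$ for $D_m$. Your Jordan-block argument is exactly how one proves that theorem, so the two routes agree in spirit; yours is more explicit.

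The gap you flag for $\Omega^{\pm}_{2m}(q)$ in characteristic $2$ is genuine and not merely cosmetic. Your parity trick (``$2m-2$ is even, hence not a power of odd $p$'') fails when $p=2$, and moreover your upper bound $k_{\max}\le 2m-1$ with equality via the partition $(2m-1,1)$ is unavailable: in characteristic $2$ the orthogonal constraint flips, forcing \emph{odd} parts to occur with even multiplicity, so $(2m-1,1)$ is excluded. One actually has $k_{\max}=2m-2$ in this case (for instance, the regular unipotent of $\Omega^{+}_{6}(2)\cong\mathrm{SL}_4(2)$ acts on the natural $6$-dimensional module with Jordan type $(4,2)$, not $(5,1)$), which is what makes the formula with $c=2m-2$ come out exactly right. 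Establishing this in general requires the unipotent classification in bad characteristic, i.e.\ precisely the Testerman--Duckworth input the paper cites. So your proposal ultimately rests on the same external reference as the paper; the difference is that you unpack the good-characteristic cases by hand and isolate where the hard work lies.
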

These calculations follow from a series of results describing the \psylows of the classical groups, proved in~\cite{testerman,weir,carterfong,wong}, which we recall here.

\subsection{\texorpdfstring{\psylows}{Sylow p-subgroups}in characteristic prime to \texorpdfstring{\(p\)}{p}}

The \psylows with \(q\) prime to \(p\) and \(p\ne2\), are described in~\cite{weir}. The construction holds for \(\sp{2m}{q}\) and \(\so{}{n}{q}\).

For the next results, take \(e=\ord{p}{q}\), and define \(r\) as \(q^e-1=p^rx\), where \((p,x)=1\).

\subsubsection{General Linear Group}\label{glps}
We need some information concerning the \psylows of the general linear group, since they are required in the description of the \psylows of the symplectic and orthogonal groups.

Consider \(\gl{n}{q}\). Suppose \(n=c+ea\) and \(a=a_0+a_1p+\ldots+a_vp^v\), where \(0\le c<e\) and \(0\le a_i<p\), and take \(G_0\) a \psylow of \(\gl{e}{q}\), which is a cyclic group of order \(p^r\).

For example, fix a base of \(\f{q^e}\) over \(\f{q}\), and identify \(\aut_{\f{q}}\f{q^e}\) with \(\gl{e}{q}\). The natural action of \(\f{q^e}^{\times}\) on \(\f{q^e}\) induces an embedding \(\f{q^e}^{\times}\hookrightarrow\gl{e}{q}\), then take a maximal \(p\text{-subgroup}\).

Define \(G_{i+1}=G_i\wr C_p\). Then a \psylow of \(\gl{n}{q}\) is isomorphic to \(\prod_0^vG_i^{a_i}\).

In particular, \(\exp_p(\gl{n}{q})=\exp(G_v)=p^{r+v}\), where \(ep^v\le n<ep^{v+1}\).

\subsubsection{Symplectic Group}\label{spps}
Consider \(\sp{2n}{q}\). If \(e\) is even, a \psylow of \(\sp{2n}{q}\) is already a \psylow of \(\gl{2n}{q}\). If \(e\) is odd, a similar construction gives that a \psylow is isomorphic to \(\prod_0^vG_i^{b_i}\), where \(2n=d+2be\) (\(0\le d<2e\)),  \(b=b_0+b_1p+\ldots+b_vp^v\) (\(0\le b_i<p\)), \(G_0\) is a \psylow of \(\sp{2e}{q}\), and \(G_{i+1}=G_i\wr C_p\).

\(G_0\) is again a cyclic group of order \(p^r\): consider the subgroup \(R\) of \(\sp{2e}{q}\) of all \(M=\begin{pmatrix}A & 0\\0 & B\end{pmatrix}\) where \(A\) and \(B\) belong to a \psylow of \(\gl{e}{q}\). Being symplectic implies that \(A^{\top}B=1\), so that \(R\) is isomorphic to a \psylow of \(\gl{e}{q}\).

In particular, \(\exp_p(\sp{2n}{q})=\exp(G_v)=p^{r+v}\), where \(ep^v\le2n<ep^{v+1}\) if \(e\) is even and \(2ep^v\le2n<2ep^{v+1}\) if \(e\) is odd.

\subsubsection{Orthogonal Group}

In odd dimension, the construction is almost the same as for the symplectic group (notice that \({\sp{2m}{q}}\) and \({\go{}{2m+1}{q}}\) have the same order), giving the same result in term of exponent evaluation: \(\exp_p(\so{}{2m+1}{q})=p^{r+v}\), where \(ep^v\le2m+1<ep^{v+1}\) if \(e\) is even and \(2ep^v\le2m<2ep^{v+1}\) if \(e\) is odd. In even dimension, consider \(\so{\epsilon}{2m}{q}\): a \psylow is already a \psylow of \(\so{}{2m+1}{q}\) if \(p\mid q^m-\epsilon\), and a \psylow of \(\so{}{2m-1}{q}\) otherwise.

\subsection{\texorpdfstring{\dsylows}{Sylow 2-subgroups}in odd characteristic}

The construction of \dsylows is described in~\cite{carterfong} and~\cite{wong}. The idea is the same as before, with proper adjustments.

\subsubsection{Symplectic Group}

Consider \(\sp{2}{q}\) first, and denote with \(W\) a \dsyloww. Since \(\size{\sp{2}{q}}=q\bigl(q^2-1\bigr)\), \(\size{W}=2^{s+1}\) where \(2^{s+1}\parallel q^2-1\).

If \(q\equiv1\bmod4\), let \(\epsilon\) be a primitive \(2^s\text{-th}\) root of unity in \(\f{q}\). Then
\[W\simeq\gen*{\begin{pmatrix}\epsilon & 0\\0 & \epsilon^{-1}\end{pmatrix},\begin{pmatrix}0 & 1\\-1 & 0\end{pmatrix}}.\]
If \(q\equiv3\bmod4\), let \(\epsilon\) be a primitive \(2^{s+1}\text{-th}\) root of unity in \(\f{q^2}\). Then
\[W\simeq\gen[\bigg]{{\begin{pmatrix}0 & 1\\1 & \epsilon+\epsilon^q\end{pmatrix}}^2,\begin{pmatrix}0 & 1\\-1 & 0\end{pmatrix}}.\]
By writing \(W\simeq\gen*{X,Y}\), it's clear that \(\exp(W)=\exp(X)=2^s\).

\begin{theorem}
	Let \(S\) be a \dsylow of \(\sp{2n}{q}\), and write \(2n=2^{r_1}+\ldots+2^{r_t}\), where \(r_1<\ldots<r_t\). Then \(S\simeq W_{r_1}\times\ldots\times W_{r_t}\), where \(W_r=\smash{W\wr\underbrace{C_2\wr\ldots\wr C_2}_{r-1}}\).
\end{theorem}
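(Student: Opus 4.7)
The plan is to prove the structure theorem by reducing to the prime-power case via an orthogonal decomposition of the symplectic space and then running an induction on the exponent. First, fix a symplectic \(\f{q}\)-space \(V\) of dimension \(2n\) and decompose it as an orthogonal direct sum \(V=V_1\perp\cdots\perp V_t\) with \(\dim V_i=2^{r_i}\). Each summand is a nondegenerate symplectic subspace, so this yields a natural embedding \(\sp{2^{r_1}}{q}\times\cdots\times\sp{2^{r_t}}{q}\hookrightarrow\sp{2n}{q}\), and any product of Sylow \(2\)-subgroups of the factors is a \(2\)-subgroup of \(\sp{2n}{q}\). The problem therefore reduces to identifying a Sylow \(2\)-subgroup of each \(\sp{2^r}{q}\), together with an order check confirming that the resulting product is Sylow in the full group.

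Next I would carry out the arithmetic bookkeeping. Since \(q\) is odd, the lifting-the-exponent lemma gives \(v_2(q^{2i}-1)=(s+1)+v_2(i)\), so the \(2\)-part of \(\size{\sp{2n}{q}}=q^{n^2}\prod_{i=1}^{n}(q^{2i}-1)\) equals \(n(s+1)+v_2(n!)=n(s+2)-s_2(n)\) by Legendre's formula, where \(s_2\) denotes the binary digit sum. Because \(s_2(n)=s_2(2n)=t\), this \(2\)-part matches \(\sum_{i=1}^{t}\bigl(2^{r_i-1}(s+2)-1\bigr)=(s+2)n-t\) as soon as we know \(\size{W_r}=2^{2^{r-1}(s+2)-1}\) for each \(r\ge1\); this last identity follows inductively from the recursion \(\size{W_{r+1}}=\size{W_r}^2\cdot 2\) and the base value \(\size{W}=2^{s+1}\).

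It remains to show that \(W_r\) is actually realized as a Sylow \(2\)-subgroup of \(\sp{2^r}{q}\), which I would prove by induction on \(r\). The case \(r=1\) is precisely the explicit description of \(W\) recalled immediately before the statement. For the inductive step, equip \(V\oplus V\) with the orthogonal sum of the symplectic forms on each copy; the swap of the two summands preserves this form, yielding an embedding \(\spnull(V)\wr C_2\hookrightarrow\spnull(V\oplus V)\). Applied to a symplectic space of dimension \(2^r\), this promotes \(W_r\) to \(W_r\wr C_2=W_{r+1}\) inside \(\sp{2^{r+1}}{q}\), and the order match established above forces it to be Sylow.

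The main obstacle is the \(2\)-adic bookkeeping, in particular the Legendre-style identity \(v_2(n!)=n-s_2(n)=n-t\) that makes the iterated wreath product reproduce exactly the Sylow order of \(\sp{2n}{q}\). Once this arithmetic is in place, the geometric construction is essentially forced by the direct-sum decomposition of symplectic spaces.
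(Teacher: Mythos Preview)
Your argument is correct and is essentially the classical Carter--Fong construction that the paper cites without reproducing: the paper states this theorem as a quoted result from~\cite{carterfong} and gives no independent proof, so there is nothing to compare against beyond noting that your orthogonal-decomposition plus order-count approach is exactly the standard one. The arithmetic you set up (lifting-the-exponent on \(q^{2i}-1\), Legendre's formula for \(v_2(n!)\), and the recursion \(\size{W_{r+1}}=2\size{W_r}^2\)) is the right bookkeeping, and the symplectic swap embedding \(\spnull(V)\wr C_2\hookrightarrow\spnull(V\perp V)\) is precisely how the iterated wreath product is realised geometrically.
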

\smallskip
In particular, \(\exp_2(\sp{2n}{q})=\exp(W_{r_t})=2^{s+r_t-1}\).

A \dsylow \(S'\) of \(\psp{2n}{q}\) is obtained by quotienting a \dsylow \(S\) of \(\sp{2n}{q}\) over \(S\cap Z\) where \(Z=\gen*{-1_n}\) is the centre of the group.

If \(S\simeq W_{r_1}\times\ldots\times W_{r_t}\), then there exists an element of the form \((1_{2^{r_1}},\ldots,1_{2^{r_{t-1}}},g)\) of maximum order, whose powers meet the centre only in \(1_n\).

If \(S\simeq W_r\), notice that an element \(x\in W_r=W_{r-1}\wr C_2\) with maximum order \(2^c\) is of the form \(\bigl((g,h),\sigma\bigr)\), where \(gh\in W_{r-1}\) has maximum order and \(C_2=\gen*{\sigma}\), so that \(x^{2^{c-1}}=\bigl((-1_{2^{r-1}},-1_{2^{r-1}}),1\bigr)=-1_{2^r}\).

Therefore,
\begin{equation}\label{pspexp}
\exp_2(\psp{2n}{q})=
\begin{dcases*}
2^{s+r_t-1} & if \(n\ne2^k\)	\\
2^{s+r_t-2} & if \(n=2^k\)
\end{dcases*}
\end{equation}
where \(2^{s+1}\parallel q^2-1\), and \(2^{r_t}\le 2n<2^{r_t+1}\).

\subsubsection{Orthogonal Group}\label{omrep}

Consider the groups \(\so{}{2n+1}{q}\) and \(\go{\pm}{2n}{q}\) first.

Since \(\size{\go{\epsilon}{2}{q}}=2(q-\epsilon)\), again \(\size{W}=2^{s+1}\) where \(2^{s+1}\parallel q^2-1\). \(W\) is dihedral so \(\exp(W)=2^s\), in particular we may take \(W=\gen*{u,w}\), where:
\begin{itemize}
	\item if \(q\equiv1\bmod4\), then the underlying form is of plus type and is represented by \(Q(x_1,x_2)=2x_1x_2\). Take \(\varepsilon\) a primitive \(2^s\text{-th}\) root of unity in \(\f{q}\) and 
	\begin{equation*}\label{omrep1}
	u=\begin{pmatrix}\epsilon & 0\\0 & \epsilon^{-1}\end{pmatrix}\qquad
	w=\begin{pmatrix}0 & 1\\1 & 0\end{pmatrix};
	\end{equation*}
	\item if \(q\equiv3\bmod4\), then the underlying form is of minus type and is represented by \(Q(x_1,x_2)=x_1^2+x_2^2\). Take \(a,b\in\f{q}\) such that \(a+b\sqrt{-1}\) is a primitive \(2^s\text{-th}\) root of unity in \(\f{q^2}\) and 
	\begin{equation*}\label{omrep2}
	u=\begin{pmatrix}a & b\\-b & a\end{pmatrix}\qquad
	w=\begin{pmatrix}-1 & 0\\0 & 1\end{pmatrix}.
	\end{equation*}
\end{itemize}
An order comparison shows that a \dsylow of \(\so{}{3}{q}\) may be obtained with the natural embedding \(W\mapsto\begin{pmatrix}\det W & 0\\0 & W\end{pmatrix}\).
In general, the following holds:
\begin{theorem}
	Let \(S\) be a \dsylow of \(\so{}{2n+1}{q}\) and let \(2n=2^{r_1}+\ldots+2^{r_t}\), with \(r_1<\ldots<r_t\). Then \(S\simeq W_{r_1}\times\ldots\times W_{r_t}\), where \(\smash{W_r=W\wr\underbrace{C_2\wr\ldots\wr C_2}_{r-1}}\) and \(W\) is a \dsylow of \(\go{\eta}{2}{q}\) with \(q\equiv\eta\bmod4\).
\end{theorem}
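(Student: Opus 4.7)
The plan mirrors the symplectic proof sketched just above. First I would verify by a $2$-adic order count that $W_{r_1}\times\cdots\times W_{r_t}$ has the right cardinality to be a \dsyloww{} of \(\so{}{2n+1}{q}\): since \(|\so{}{2n+1}{q}|=q^{n^2}\prod_{i=1}^{n}(q^{2i}-1)\) and \(2^{s+1+v_2(i)}\parallel q^{2i}-1\) by lifting-the-exponent (where $v_2$ denotes the $2$-adic valuation), the $2$-part of \(|\so{}{2n+1}{q}|\) equals \(2^{n(s+1)+v_2(n!)}\). Using \(n=\sum_{i}2^{r_i-1}\) and \(v_2(n!)=n-t\), this matches \(\prod_i|W_{r_i}|=2^{\sum_i((s+2)2^{r_i-1}-1)}\). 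Hence it suffices to embed \(W_{r_1}\times\cdots\times W_{r_t}\) into \(\so{}{2n+1}{q}\) as a $2$-subgroup.

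To do so I would decompose the underlying quadratic space as an orthogonal sum \(V=\gen{v_0}\perp V_1\perp\cdots\perp V_t\) with \(\dim V_i=2^{r_i}\), choosing the type \(\eta_i\) of \(V_i\) to be \(\eta\) (with \(q\equiv\eta\bmod 4\)) when \(r_i=1\), and plus type when \(r_i\ge 2\). The pointwise stabilizer of this decomposition inside \(\so{}{2n+1}{q}\) contains \(\prod_i\go{\eta_i}{2^{r_i}}{q}\), intersected with the kernel of the total determinant, so the problem reduces to showing that a \dsyloww{} of \(\go{\eta_i}{2^{r_i}}{q}\) of the appropriate type is isomorphic to \(W_{r_i}\).

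This is the main claim, proved by induction on $r$. The base case \(r=1\) is the explicit dihedral description of $W$ given immediately before the theorem statement. For the inductive step on a $2^r$-dimensional plus-type space, split it orthogonally as \(U\oplus U'\) with \(U,U'\) two isometric $2^{r-1}$-dimensional subspaces whose type matches the one used inductively (two summands of plus type when \(r\ge 3\); two of type $\eta$ when \(r=2\), noting that \(\eta\cdot\eta=+\) for either sign). The stabilizer of the unordered pair \(\{U,U'\}\) is isomorphic to \(\go{\eta'}{2^{r-1}}{q}\wr C_2\), whose \dsyloww{} is \(W_{r-1}\wr C_2=W_r\) by inductive hypothesis, and this realises the full Sylow order by the count above. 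The main technical nuisance throughout is bookkeeping of types, discriminants, and determinants: one has to ensure that each orthogonal decomposition actually has the claimed Witt type, and that the resulting product lies inside \(\so{}{2n+1}{q}\) rather than merely in \(\go{}{2n+1}{q}\), which can always be arranged by a suitable sign adjustment on \(\gen{v_0}\).
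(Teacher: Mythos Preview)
The paper does not actually prove this theorem: it is quoted without proof from Carter--Fong~\cite{carterfong} (and Wong~\cite{wong} for the later \(\Omega\)-versions), after the remark that ``the idea is the same as before, with proper adjustments''. Your sketch is precisely the Carter--Fong argument: a \(2\)-adic order count via lifting-the-exponent and Legendre's formula, followed by an inductive embedding built from an orthogonal block decomposition \(V=\gen{v_0}\perp V_1\perp\cdots\perp V_t\) with \(\dim V_i=2^{r_i}\), the determinant being fixed up on the anisotropic line \(\gen{v_0}\) exactly as in the paper's \(W\mapsto\left(\begin{smallmatrix}\det W & 0\\0 & W\end{smallmatrix}\right)\) embedding for \(n=1\). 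Your bookkeeping of Witt types (type \(\eta\) on the \(2\)-dimensional blocks, plus type on the larger ones, with \(\eta\perp\eta=+\) in dimension~\(4\)) is correct and is the ``proper adjustment'' needed relative to the symplectic case, so there is nothing to add.
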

While in even dimension:
\begin{theorem}\label{thgo}
	Let \(S\) be a \dsylow of \(\go{\eta}{2n}{q}\).
	\begin{enumerate}
		\item If \(q^n\equiv\eta\bmod4\), then \(S\) is isomorphic to a \dsylow of \(\so{}{2n+1}{q}\).
		\item If \(q^n\equiv-\eta\bmod4\), then \(S\simeq C_2\times C_2\times S_0\), where \(S_0\) is a \dsylow of \(\so{}{2n-1}{q}\).
	\end{enumerate}
\end{theorem}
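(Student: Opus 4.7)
The plan is to compute the $2$-part of $\size{\go{\eta}{2n}{q}}$, compare it, case by case, with the $2$-parts of $\size{\so{}{2n+1}{q}}$ and $\size{\so{}{2n-1}{q}}$ whose \dsylows are given by the preceding theorem, and then exhibit in each case an explicit subgroup that realises the full $2$-part with the advertised structure. The arithmetic input is the identity $(q^n-1)(q^n+1)=q^{2n}-1$ together with the observation that for $q$ odd exactly one of $q^n\pm 1$ has $2$-adic valuation $1$; the hypothesis $q^n\equiv\eta\bmod 4$ is thus equivalent to $(q^n-\eta)_2\ge 4$ and $(q^n+\eta)_2=2$, and conversely. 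Combined with
\[
\size{\go{\eta}{2n}{q}}_2=2\,(q^n-\eta)_2\prod_{i=1}^{n-1}(q^{2i}-1)_2,
\]
this yields $\size{\go{\eta}{2n}{q}}_2=\size{\so{}{2n+1}{q}}_2$ in case (i), and $\size{\go{\eta}{2n}{q}}_2=4\,\size{\so{}{2n-1}{q}}_2$ in case (ii).

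In case (i) I would extend the underlying quadratic form on $V$ by a non-isotropic line and embed $\go{\eta}{2n}{q}\hookrightarrow\so{}{2n+1}{q}$ via the map $M\mapsto\mathrm{diag}(\det M,M)$, exactly as in \autoref{omrep} where this same construction produces a \dsylow of $\so{}{3}{q}$ from one of $\go{\eta}{2}{q}$; since $(\det M)^2=1$, the image indeed lies in $\so{}{2n+1}{q}$. The equality of $2$-parts then forces a \dsylow of $\go{\eta}{2n}{q}$ to be a \dsylow of $\so{}{2n+1}{q}$, as claimed.

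In case (ii) I would pick an anisotropic vector $v\in V$ and use the orthogonal decomposition $V=\gen*{v}\perp v^\perp$, which gives a subgroup $\go{}{1}{q}\times\go{}{2n-1}{q}\le\go{\eta}{2n}{q}$, the odd-dimensional summand carrying the (unique) form type on $v^\perp$. Since $\go{}{1}{q}\simeq C_2$ and in odd dimension $\go{}{2n-1}{q}=\so{}{2n-1}{q}\times\gen*{-I}$, a \dsylow of this direct product is $C_2\times C_2\times S_0$ with $S_0$ a \dsylow of $\so{}{2n-1}{q}$; by the order count above it already carries the full $2$-part of $\go{\eta}{2n}{q}$, so it is itself a \dsylow of the orthogonal group and has the advertised shape.

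The main obstacle is the $2$-adic bookkeeping: one has to split into the four subcases given by $q\bmod 4$ and the parity of $n$ to verify the key equivalence $(q^n-\eta)_2\ge 4\iff q^n\equiv\eta\bmod 4$ and to pin down the order formula above; once this is settled the two embeddings are completely routine.
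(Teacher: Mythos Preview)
The paper does not actually prove this theorem: it is one of the results ``recalled'' from \cite{carterfong} (and \cite{wong}), so there is no proof in the paper to compare against. Your argument is correct and is essentially the classical Carter--Fong approach: compute the $2$-part of the order via $(q^{2n}-1)_2=(q^n-\eta)_2(q^n+\eta)_2$, then realise the Sylow $2$-subgroup through the natural embedding $M\mapsto\mathrm{diag}(\det M,\,M)$ into $\so{}{2n+1}{q}$ in case~(i), and through the stabiliser $\go{}{1}{q}\times\go{}{2n-1}{q}\cong C_2\times C_2\times\so{}{2n-1}{q}$ of an anisotropic line in case~(ii). The only point worth making explicit is that in odd dimension the two similarity classes of non-degenerate forms yield isomorphic orthogonal groups, so the choice of anisotropic $v$ in case~(ii) is irrelevant; you gesture at this with ``the (unique) form type'', which is fine.
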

The exponent of a \dsylow is the same in \(\go{}{2n}{q}\) and \(\so{}{2n}{q}\). The argument is similar to the one used to deduce \eqref{pspexp}: consider the previous construction \(W=\gen*{u,w}\) and check the determinant. If \(n=2^k\),  a \dsylow of \(\so{}{2}{q}\) is obtained using \(W'=\gen*{u}\) instead, which has the order halved but the same exponent. If \(n\ne2^k\), consider a \dsylow \((C_2\times C_2)\times W_{r_1}\times\ldots\times W_{r_t}\) of \(\go{}{2n}{q}\), then an element of maximum order and determinant one can be obtained by taking an element of \(W_{r_t}\) with maximum order, and adjusting the determinant by taking suitable elements of the other groups.

The comparison of the exponents of \(\so{}{n}{q}\) and \(\om{}{n}{q}\) shares the same idea, i.e. if \(n\ne 2^k\) then the exponent doesn't change: \(S\simeq (C_2\times C_2)\times W_{r_1}\times\ldots\times\ldots W_{r_t}\) so take an element of \(W_{r_t}\) with maximum order, and adjust the spinor norm taking a proper element of the other groups.

It's not immediately obvious what happens when \(n=2^r\) though, i.e. if there exists an element of maximum order and spinor norm 1. The result can be deduced anyway using theorems 7, 8, 10 in~\cite{wong}.

\begin{theorem}\label{thomeganonsquare}
	If \(q^n\equiv-\eta\bmod4\), then a \dsylow of \(\om{\eta}{2n}{q}=\pom{\eta}{2n}{q}\) is isomorphic to a \dsylow of \(\go{\eta'}{2n-2}{q}\), where \(q^{n-1}\equiv\eta'\bmod4\).
\end{theorem}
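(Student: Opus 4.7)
The plan is to combine case (ii) of Theorem \ref{thgo} with a careful analysis of how the determinant and spinor norm restrict to the Wong-style wreath-product Sylow construction. Since \(q^n\equiv-\eta\pmod 4\), Theorem \ref{thgo}(ii) gives a Sylow 2-subgroup \(T\) of \(\go{\eta}{2n}{q}\) of the shape \(E\times S_0\), where \(E\cong C_2\times C_2\) and \(S_0\) is a Sylow 2-subgroup of \(\so{}{2n-1}{q}\). Since \([\go{\eta}{2n}{q}:\om{\eta}{2n}{q}]=4\) is a 2-power, \(T\cap\om{\eta}{2n}{q}\) is a Sylow 2-subgroup of \(\om{\eta}{2n}{q}\), of order \(\size{T}/4=\size{S_0}\).

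The main step is then to show \(T\cap\om{\eta}{2n}{q}=S_0\). Consider the map \(T\to\go{\eta}{2n}{q}/\om{\eta}{2n}{q}\cong C_2\times C_2\) induced by determinant and spinor norm. In the explicit construction the \(E\)-factor is generated by two specific involutions: one with nontrivial determinant (swapping two orthogonal anisotropic vectors) and one lying in \(\mathrm{SO}\) but carrying a nontrivial spinor norm; this is the content of the cited theorems in \cite{wong}. Meanwhile, with a suitable choice of the embedding \(\so{}{2n-1}{q}\hookrightarrow\so{\eta}{2n}{q}\), all generators of \(S_0\) have trivial spinor norm and lie in \(\Omega\). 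Thus \(E\) maps isomorphically onto \(\go{\eta}{2n}{q}/\om{\eta}{2n}{q}\), while \(S_0\) lies in the kernel, forcing \(T\cap\om{\eta}{2n}{q}=S_0\). To finish, note that since \(q^{n-1}\equiv\eta'\pmod 4\) by choice of \(\eta'\), case (i) of Theorem \ref{thgo} identifies a Sylow 2-subgroup of \(\go{\eta'}{2n-2}{q}\) with one of \(\so{}{2(n-1)+1}{q}=\so{}{2n-1}{q}\), i.e.\ with \(S_0\), yielding the claimed isomorphism.

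The identity \(\om{\eta}{2n}{q}=\pom{\eta}{2n}{q}\) is justified separately: under the hypothesis \(q^n\equiv-\eta\pmod 4\), the spinor norm of \(-I\) in \(\so{\eta}{2n}{q}\) is non-trivial (the standard fact being that \(-I\in\om{\eta}{2n}{q}\) iff \(q^n\equiv\eta\pmod 4\)), so \(-I\notin\om{\eta}{2n}{q}\) and the centre of \(\Omega\) is trivial.

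The main obstacle is the spinor-norm bookkeeping in the middle paragraph: one must check that the two extra involutions introduced in case (ii) of Theorem \ref{thgo} really account for both the determinant and the spinor norm obstructions, and that the complementary \(S_0\) sits inside \(\Omega\). This is exactly what \cite[Thm.~7, 8, 10]{wong} provide; modulo that input, the argument reduces to comparison of orders together with an invocation of case (i) of Theorem \ref{thgo} for the smaller group \(\go{\eta'}{2n-2}{q}\).
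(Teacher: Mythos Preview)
The paper does not actually prove this theorem: it is quoted as one of the results ``deduced anyway using theorems 7, 8, 10 in~\cite{wong}'', with no argument supplied. So there is no ``paper's own proof'' to compare against; your proposal is already more detailed than what the paper offers.

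That said, your sketch is a faithful outline of how the Wong argument goes, and you correctly identify the only delicate point: in the decomposition \(T\cong E\times S_0\) coming from \autoref{thgo}(ii), one needs that the factor \(E\cong C_2\times C_2\) surjects onto \(\go{\eta}{2n}{q}/\om{\eta}{2n}{q}\) while \(S_0\) lands inside \(\om{\eta}{2n}{q}\). One caution: \autoref{thgo} as stated in the paper asserts only an \emph{abstract} isomorphism of Sylow subgroups, so phrases like ``with a suitable choice of the embedding \(\so{}{2n-1}{q}\hookrightarrow\so{\eta}{2n}{q}\), all generators of \(S_0\) have trivial spinor norm'' are doing real work that you are (rightly) outsourcing to \cite{wong}. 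A generic Sylow \(2\)-subgroup of \(\so{}{2n-1}{q}\) certainly contains elements of nontrivial spinor norm, so the content is that Wong's explicit generators for \(E\) and \(S_0\) inside \(\go{\eta}{2n}{q}\) separate the determinant/spinor-norm obstructions exactly as you describe. Your final step, invoking \autoref{thgo}(i) for \(\go{\eta'}{2n-2}{q}\) to identify its Sylow with one of \(\so{}{2n-1}{q}\), and the centre computation showing \(-I\notin\om{\eta}{2n}{q}\) under the hypothesis, are both correct and cleanly stated.
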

\begin{theorem}\label{thomegadispari}
	A \dsylow of \(\om{}{2n+1}{q}=\pom{}{2n+1}{q}\) is isomorphic to a \dsylow of \(\k{\eta}{2n}{q}\), where \(q^n\equiv\eta\bmod4\).
\end{theorem}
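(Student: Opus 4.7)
The plan is to derive the isomorphism by comparing the restriction of the spinor norm on explicit matched Sylow 2-subgroups of \(\go{\eta}{2n}{q}\) and \(\so{}{2n+1}{q}\). Under the hypothesis \(q^n\equiv\eta\bmod4\), \autoref{thgo}(i) already supplies a \dsyloww{} \(T\le\go{\eta}{2n}{q}\) and a \dsyloww{} \(S\le\so{}{2n+1}{q}\) which are both isomorphic to \(W_{r_1}\times\cdots\times W_{r_t}\) with \(2n=2^{r_1}+\cdots+2^{r_t}\) and \(W\) the \(2\)-dimensional building block recalled in \autoref{omrep}; fix such an isomorphism \(\phi\colon T\to S\). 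Inside \(T\), a \dsyloww{} of \(\k{\eta}{2n}{q}\) is \(T\cap\k{\eta}{2n}{q}\) (index \(2\)), and similarly inside \(S\) a \dsyloww{} of \(\om{}{2n+1}{q}\) is \(S\cap\om{}{2n+1}{q}\) (index \(2\)); the task reduces to showing that \(\phi\) can be chosen to respect the spinor norm.

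The first step is to regard the spinor norm on each side as a homomorphism into \(\f{q}^{\times}/(\f{q}^{\times})^{2}\simeq C_{2}\) and to evaluate it on the generators of \(T\) and of \(S\) arising from the wreath construction. Using that the spinor norm of the reflection \(r_v\) equals \(Q(v)\) modulo squares, the values on the distinguished generators \(u\) and \(w\) of \(W\) can be computed in both the \(q\equiv1\bmod4\) and the \(q\equiv3\bmod4\) subcases, and then propagated through the wreath-product layers \(W_r=W_{r-1}\wr C_2\) via the reflection decompositions of the permutation generators.

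The second step is to compare these computations. On the \((2n+1)\)-dimensional side, the embedding \(W\hookrightarrow\so{}{3}{q}\) given by \(W\mapsto\begin{pmatrix}\det W & 0\\ 0 & W\end{pmatrix}\) adds, for each \(w\)-type generator, an extra reflection on the first coordinate of the ambient form on \(\f{q}^{2n+1}\). The hypothesis \(q^n\equiv\eta\bmod4\) is exactly what is needed to make the resulting correction a square, so that the two homomorphisms \(\theta\circ\phi^{-1}\) and \(\theta'\) on \(S\) agree; consequently \(\phi\) maps \(T\cap\k{\eta}{2n}{q}\) onto \(S\cap\om{}{2n+1}{q}\), proving the claim.

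The hard part will be the case-by-case bookkeeping of spinor norms across the two parities of \(q\bmod4\), the sign \(\eta\), and the two shapes of \(W\); in particular, checking that at every wreath layer the permutation generator contributes trivially to the spinor-norm mismatch between \(S\) and \(T\). For the delicate boundary configuration where the \dsyloww{} degenerates to a single factor \(W_{r_t}\) and there is less room for adjustment, I would fall back to Theorems 7, 8, and 10 of~\cite{wong} cited in the preceding paragraphs, which are tailored to precisely this situation.
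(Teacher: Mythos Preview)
The paper does not prove this theorem at all: it is simply quoted, together with \autoref{thomeganonsquare}, as one of ``theorems 7, 8, 10 in~\cite{wong}'', invoked precisely because the direct argument about elements of maximum order and spinor norm~1 is inconclusive when \(n=2^r\). So there is nothing in the paper to compare your argument with beyond that citation; you are attempting strictly more than the paper does.

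Your strategy---realize the isomorphism of \autoref{thgo}(i) concretely as \(\phi\colon T\to S\) between the explicit wreath-product Sylows, and then verify that the two spinor-norm characters on \(T\) and \(S\) agree through \(\phi\)---is the natural one and is in spirit how Wong argues. Two points deserve tightening. First, your sentence ``the hypothesis \(q^n\equiv\eta\bmod4\) is exactly what is needed to make the resulting correction a square'' misplaces the role of that congruence: it is what puts you in case~(i) of \autoref{thgo} and hence gives you the isomorphism \(T\simeq S\) in the first place, not what controls the square-class of \(Q(e_1)\) on the anisotropic line of \(\f{q}^{2n+1}\). The latter depends on which odd-dimensional form you pick (you are free to choose the discriminant) and on the actual reflection decompositions of \(u\), \(w\), and the wreath generators in the two subcases of \autoref{omrep}; that bookkeeping is where the real work lies and you have only asserted it. Second, your final paragraph hands the single-factor case \(n=2^{r}\) back to \cite{wong}, which is exactly the case the paper is citing Wong for; so at the one point where an independent argument would add something, your proposal and the paper coincide in deferring to the external reference.
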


So the remaining cases are \(\k{\eta}{2n}{q}\)  and \(\om{\eta}{2n}{q}\), where \(q^n\equiv\eta\bmod4\) and \(n=2^r\).

Consider \(\k{\eta}{2n}{q}\) first. Take \(n=1\), and remember the construction of a \dsylow of \(\go{}{2}{q}\) as \(W\simeq\gen*{u,w}\), with \(u^{2^s}=w^2=1\) and \(u^w=u^{-1}\) (\ref{omrep}).

The \dsylow of \(\k{}{2}{q}\) contained in \(W\simeq\gen*{u,w}\) is \(W'\simeq\gen*{v,w}\), where \(v=u^2\). It's still dihedral, but it has both the order and the exponent halved, being now \(2^s\) and \(2^{s-1}\) respectively. In particular:
\begin{equation}\label{kgen}
v^{2^{s-1}}=w^2=1,\qquad v^w=v^{-1}.
\end{equation}
Put \(e=uw\), so that \(e\in\go{}{2}{q}\backslash\k{}{2}{q}\), \(e^2=uu^w=1\) and \(W\simeq W'\gen*{e}\), where
\begin{equation}\label{krel}
v^e=v^{-1},\qquad w^e=vw.
\end{equation}
From now on use \(e\) to denote \(\begin{pmatrix}e & 0\\0 & 1_{2n-2}\end{pmatrix}\in\go{}{2n}{q}\backslash\k{}{2n}{q}\), which has order 2.
\begin{theorem}
	Take \(\k{\eta}{2n}{q}\), where \(n=2^r\) (\(r\ge0\)) and \(q^n\equiv\eta\bmod4\). Let \(2^{s+1}\parallel q^2-1\), \(E=\gen*{e}\simeq C_2\), \(W'\simeq\gen*{v,w}\) as in \eqref{kgen}, \(\rho:E\rightarrow\aut W'\) as in \eqref{krel}, and \(\,V\!=\gen*{a,b}\simeq C_2\times C_2\).
	
	Then a \dsylow of \(\k{\eta}{2n}{q}\) is isomorphic to the repeated twisted wreath product \(W'_r=W'\twr\underbrace{V\twr\ldots\twr V}_{r}\), where the action of \(\,V\!\) on \((W'_i)^2\) is given by \((x,y)^a=(x^e,y^e)\) and \((x,y)^b=(y,x)\) for any \(x,y\in W'_i\).
\end{theorem}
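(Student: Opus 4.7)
The proof proceeds by induction on $r$. The base case $r = 0$ gives $\k{\eta}{2}{q}$ with $q \equiv \eta \bmod 4$, and the Sylow 2-subgroup $W' = \gen*{v, w}$ has already been described in the paragraphs preceding the theorem.

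For the inductive step, assume the statement holds for $r-1$ and note that $n = 2^r$ with $r \ge 1$ forces $q^n = (q^{2^{r-1}})^2 \equiv 1 \bmod 4$, so $\eta = +1$. The plan is to exhibit $W'_r = W'_{r-1} \twr V$ as an explicit subgroup of $\k{+}{2n}{q}$. I would decompose the underlying $2n$-dimensional orthogonal space as $U = U_1 \perp U_2$ with $\dim U_i = n$, obtaining an embedding
\[
\go{\eta'}{n}{q} \times \go{\eta'}{n}{q} \hookrightarrow \go{+}{2n}{q},
\]
where $\eta'$ is determined by $q^{n/2} \equiv \eta' \bmod 4$. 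By the inductive hypothesis, $W'_{r-1}$ is a Sylow 2-subgroup of $\k{\eta'}{n}{q}$, so its diagonal copy $(W'_{r-1})^2$ lies inside $\k{+}{2n}{q}$ thanks to the multiplicativity of the spinor norm on the direct factors.

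Next I would adjoin $V = \gen*{a, b} \simeq C_2 \times C_2$ as follows. Let $b$ be the orthogonal involution swapping $U_1$ with $U_2$: a direct determinant-and-spinor-norm check places it in $\k{+}{2n}{q}$, and by construction it realizes the swap $(x,y)^b = (y,x)$. Let $a = (e_1, e_2)$ with $e_i$ the analogue, in $\go{\eta'}{n}{q} \setminus \k{\eta'}{n}{q}$, of the element $e$ from the base construction (acting as $e$ on the first hyperbolic plane of $U_i$ and trivially on its orthogonal complement). Since neither $e_i$ lies in $\k$, their product has trivial spinor norm, so $a \in \k{+}{2n}{q}$; and $(x,y)^a = (x^e, y^e)$ follows directly. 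The subgroup generated by $(W'_{r-1})^2$, $a$, and $b$ is therefore isomorphic to $W'_{r-1} \twr V = W'_r$.

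The main obstacle is verifying that this subgroup is in fact a Sylow 2-subgroup, i.e.\ that its order matches the full 2-part of $\size{\k{+}{2n}{q}}$. From the recursion $\size{W'_i} = 4 \cdot \size{W'_{i-1}}^2$ one gets $\size{W'_r} = 2^{2^r s + 2^{r+1} - 2}$, and this must be compared with the 2-part of $\size{\k{+}{2n}{q}}$ computed from the standard order formula for $\go{+}{2n}{q}$, the index of $\om{+}{2n}{q}$ in $\k{+}{2n}{q}$, and the hypothesis $2^{s+1} \parallel q^2 - 1$. A smaller subtlety is that no additional relations creep into the constructed subgroup beyond those defining the semidirect product; this follows because $V$ acts faithfully on the base via generators coming from two independent ambient isometries (swap of factors vs.\ diagonal conjugation by $e$), so Lagrange forces the presentation to be exactly $W'_r$ and the induction closes.
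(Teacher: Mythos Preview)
The paper does not actually prove this theorem: it is one of the results ``recalled'' from \cite{wong} at the start of Section~\ref{capitolopsylow}, so there is no proof in the paper to compare against. Your inductive construction---splitting the $2n$-space as $U_1\perp U_2$, embedding two copies of $W'_{r-1}$ via the inductive hypothesis, and adjoining the swap $b$ together with the diagonal element $a=(e_1,e_2)$---is precisely the standard argument and is the approach Wong uses.

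That said, your write-up has two soft spots. First, you flag the order comparison as ``the main obstacle'' and then do not carry it out; since the whole point of the argument is that the constructed subgroup has the correct $2$-power order, this is the heart of the proof and cannot be left as a remark. The computation is routine once you note that $2^{s+1}\parallel q^2-1$ forces $2^{s+1}\parallel q^{2i}-1$ for $i$ odd and adds one further factor of $2$ each time $i$ doubles, but you should actually write it down. Second, the claim that $b\in\k{+}{2n}{q}$ is not quite ``a direct check'': the spinor norm of the block-swap depends on the discriminant of the form on $U_1$, and one must use that $U_1$ carries the $\eta'$-form with $q^{n/2}\equiv\eta'\bmod 4$ to see that the swap decomposes into reflections whose product has square spinor norm. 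Once those two points are filled in, the argument is complete.
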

With this action, \(\exp(W'_r)=2\cdot\exp(W'_{r-1})\), so that \(\exp(W'_r)=2^r\exp(W')=2^{r+s-1}\). Indeed, take for example \(v_i\in W'_i\) of maximum order, then \(\bigl((v_i,1),b\bigr)\in W'_{i+1}\) has double its order.
\br
Consider now \(\om{\eta}{2n}{q}\) and \(\pom{\eta}{2n}{q}=\om{}{2n}{q}/Z\), where \(Z=\gen*{z}\), and \(z=-1_{2n}\) is the centre of \(\om{\eta}{2n}{q}\), and again \(q^n\equiv\eta\bmod4\).

For \(n=1\) a \dsylow of \(\om{\eta}{2}{q}\) is \(W''=\gen*{v}\), which has order and exponent \(2^{s-1}\). This time, \(W\) is not a split extension of \(T\), so the inductive construction has to start from \(n=2\).

Consider again \(e=\begin{pmatrix}uw & 0\\0 & 1_{2n-2}\end{pmatrix}\in\go{\eta}{2n}{q}\backslash\k{\eta}{2n}{q}\), and take \[f=\begin{pmatrix}1_2 & 0 & 0\\0 & w & 0\\0 & 0 & 1_{2n-4}\end{pmatrix}\in\k{\eta}{2n}{q}\backslash\om{\eta}{2n}{q}.\]
\(F=\gen*{e,f}\) is then a non-cyclic group of order 4 with trivial intersection with \(\om{}{2n}{q}\). For \(n=2\), one may take \(W''=\gen*{d,g,h,k}\), where
\begin{equation*}
d=\begin{pmatrix}u & 0\\0 & u^{-1}\end{pmatrix},\qquad
g=\begin{pmatrix}u & 0\\0 & u\end{pmatrix},\qquad
h=\begin{pmatrix}0 & 1_2\\1_2 & 0\end{pmatrix},\qquad
k=\begin{pmatrix}0 & w\\w & 0\end{pmatrix}.
\end{equation*}
In particular,
\begin{gather}\label{omgen}
d^{2^{s-1}}=g^{2^{s-1}}=z,\qquad z^2=h^2=k^2=1,\qquad d^h=d^{-1},\qquad g^k=g^{-1},\\\nonumber
[d,g]=[d,k]=[h,g]=[h,k]=1.
\end{gather}
which means that \(W''\) is a central product of two dihedral groups of order \(2^{s+1}\), and \(\gen*{z}\) is the centre of \(W''\). Then, the action of \(F\) on \(W''\) is given by
\begin{gather}\label{omrel}
d^e=g^{-1},\qquad g^e=d^{-1},\qquad h^e=gk,\qquad k^e=dh,\\\nonumber
d^f=g,\qquad g^f=d,\qquad h^f=k,\qquad k^f=h.
\end{gather}
\begin{theorem}
	Take \(\om{\eta}{2n}{q}\), where \(n=2^r\) (\(r\ge1\)) and \(q^n\equiv\eta\bmod4\). Let \(2^{s+1}\parallel q^2-1\), \(F=\gen*{e,f}\simeq C_2\times C_2\), \(W''\simeq\gen*{d,g,h,k}\) as in \eqref{omgen}, \(\rho:F\rightarrow\aut W''\) as in \eqref{omrel}, and \(V=\gen*{a,b,c}\simeq C_2\times C_2\times C_2\).
	
	Then a \dsylow of \(\om{\eta}{2n}{q}\) is isomorphic to the repeated twisted wreath product \(W''_r=W''\twr\underbrace{V\twr\ldots\twr V}_{r-1}\), where the action of \(\,V\!\) on \((W''_i)^2\) is given by \((x,y)^a=(x^e,y^e)\), \((x,y)^b=(x^f,y^f)\), and \((x,y)^c=(y,x)\) for any \(x,y\in W''_i\).
\end{theorem}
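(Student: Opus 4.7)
The proof is by induction on $r$, the case $r = 1$ being already established by the explicit construction of $W''$ and the relations \eqref{omgen}, \eqref{omrel} above. For the inductive step with $r \geq 2$, let $U$ denote the $2n$-dimensional quadratic space supporting $\om{\eta}{2n}{q}$, where $2n = 2^{r+1}$. The plan is to realize $W''_r$ concretely as a subgroup of $\om{\eta}{2n}{q}$ whose order matches the full 2-part of $\size{\om{\eta}{2n}{q}}$.

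I would decompose $U = U_1 \perp U_2$ as an orthogonal direct sum of two non-degenerate subspaces of dimension $2^r = 2\cdot 2^{r-1}$, both chosen to carry quadratic forms of the same type $\eta_0$, where $\eta_0$ is fixed by $q^{2^{r-1}} \equiv \eta_0 \bmod 4$. Such a decomposition exists by a discriminant calculation and is compatible with the ambient type $\eta$ (using $q^{2^r} \equiv \eta \bmod 4$); moreover it is exactly the setting to which the inductive hypothesis applies on each summand. The diagonal embedding $\om{\eta_0}{2^r}{q}\times\om{\eta_0}{2^r}{q}\hookrightarrow\om{\eta}{2n}{q}$ then yields a copy of $(W''_{r-1})^2$ inside $\om{\eta}{2n}{q}$.

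Next, I would exhibit three commuting involutions $\alpha, \beta, \gamma \in \om{\eta}{2n}{q}$ realizing the prescribed action of $V = \gen*{a,b,c}$: the element $\gamma$ swaps $U_1$ with $U_2$ via a fixed isometry between them (implementing $c$), while $\alpha$ and $\beta$ act diagonally by conjugation-by-$e$ and conjugation-by-$f$ respectively, built from copies of the base-case elements $e, f$ placed in the corresponding summands of each $U_i$. A direct check using \eqref{omrel} then shows that $\gen*{(W''_{r-1})^2,\alpha,\beta,\gamma}$ has the structure of the twisted wreath product $W''_r$. The Sylow claim follows from an order count: the twisted wreath structure gives $\size{W''_r} = 8\,\size{W''_{r-1}}^2$, which by induction matches the 2-part of $\size{\om{\eta}{2n}{q}}$ expanded through the standard order formula.

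The main obstacle I expect is verifying that $\alpha, \beta, \gamma$ actually land in $\om{\eta}{2n}{q}$ rather than only in the larger group $\go{\eta}{2n}{q}$: each has both a determinant and a spinor norm to control. The choice $V \simeq C_2 \times C_2 \times C_2$ (compared to the $C_2 \times C_2$ appearing in the earlier $\k{\eta}{2n}{q}$ construction) is designed precisely to absorb these obstructions — the generator $b$, lifted from $f \in \k{}{2}{q}\setminus\om{}{2}{q}$, provides exactly the slack needed to cancel the spinor-norm contribution of $\alpha$, lifted from $e \in \go{}{2}{q}\setminus\k{}{2}{q}$. Propagating this cancellation through every level of the twisted wreath product while keeping the action consistent with the inductively embedded $(W''_{r-1})^2$ is the delicate step; one can control it at each stage by appealing to Theorems 7, 8, and 10 of \cite{wong}.
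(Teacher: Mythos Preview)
The paper does not actually prove this theorem: as announced at the opening of \autoref{capitolopsylow}, the Sylow-structure results for the classical groups are quoted from \cite{carterfong,wong}, and the paper simply states this one and then moves on to compute $\exp(W''_r)$. Your inductive outline --- split the quadratic space orthogonally into two halves, embed $(W''_{r-1})^2$ on the summands, and realize the generators of $V$ by a block swap together with block-diagonal copies of $e$ and $f$ whose determinants and spinor norms cancel in pairs --- is precisely the argument in \cite{wong}, so your proposal agrees with the paper's cited source; indeed you close by invoking the very theorems of \cite{wong} that the paper itself points to.
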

Again \(\exp(W''_r)=2\cdot\exp(W''_{r-1})\), so the exponent is \(\exp(W''_r)=2^{r-1}\exp(W'')=2^{r+s-2}\).

Here, passing from \(\om{}{2n}{q}\) to \(\pom{}{2n}{q}\) requires to replace \(W''\) with \(W''/\gen*{z}\) in the construction. This lowers the exponent when \(r=1\), but it's irrelevant since \(\pom{+}{4}{q}\) is not simple and \(\pom{-}{4}{q}\simeq\psl{2}{q^2}\) is already studied elsewhere. For \(r>1\) the exponent doesn't change, take for example \(r=2\): \(d\) and \(g\) have order \(2^{s-1}\) since in the projective group \(z=1\).

Then \(x=\bigl((d,1),ac\bigr)\in (W''/\gen*{z})_2\) has order \(2^s\), indeed:
\begin{align*}
x	&=\bigl((d,1),ac\bigr),	\\
x^2	&=\bigl((d,1)(1,d^e),1\bigr)=\bigl((d,g^{-1}),1\bigr),	\\
x^3	&=\bigl((d,1)(g^{-e},d^e),ac\bigr)=\bigl((d^2,g^{-1}),ac\bigr),	\\
x^4 &=\bigl((d,1)(g^{-e},(d^e)^2),1\bigr)=\bigl((d^2,g^{-2}),1\bigr),	\\
x^{2^{s-1}}	&=\bigl((d^{2^{s-2}},g^{-2^{s-2}}),1\bigr),	\\
x^{2^s}	&=\bigl((d^{2^{s-1}},g^{-2^{s-1}}),1\bigr)=\bigl((1,1),1\bigr).
\end{align*}

\subsection{\texorpdfstring{\psylows}{Sylow p-subgroups}in characteristic \texorpdfstring{\(p\)}{p}}\label{sectionunipotent}

This case is completely solved by corollary 0.5 of~\cite{testerman}, which in our situation can be simplified in the following way:
\begin{theorem}
	Let \(G\) be a classical group defined over a field of characteristic \(p\). Then the exponent of a \psylow of \(G\) is \(\min\Set{p^a|p^a>c-1}\), where \(c\) is the Coxeter number of \(G\).
\end{theorem}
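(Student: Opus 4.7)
The plan is to deduce the statement as a direct application of corollary 0.5 of~\cite{testerman}, which gives for any finite group of Lie type \(G\) in its defining characteristic \(p\) the formula \(\exp_p(G)=\min\set{p^a\mid p^a\ge h}\), where \(h\) is the Coxeter number of the ambient algebraic group. Since \(p^a\ge h\) is equivalent to \(p^a>h-1\), what remains is to verify \(h=c\) family by family and to check that passing to the simple projective quotient does not change the \(p\)-part of the exponent.

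The relevant Coxeter numbers are standard: \(\sp{2m}{q}\) is of type \(C_m\) with \(h=2m\), \(\om{}{2m+1}{q}\) of type \(B_m\) with \(h=2m\), and \(\om{+}{2m}{q}\) of type \(D_m\) with \(h=2m-2\). For the twisted form \(\om{-}{2m}{q}={}^2D_m(q)\), the Coxeter number of the ambient simply connected group is still \(2m-2\); since corollary 0.5 of~\cite{testerman} is phrased for arbitrary Frobenius/Steinberg fixed-point subgroups, it applies identically in this twisted setting. These values match exactly the \(c=2m\) for symplectic and odd-dimensional orthogonal groups, and \(c=2m-2\) for even-dimensional orthogonal groups, as stated in~\eqref{pq}.

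For the projective quotients one observes that the centre \(Z(G)\) has order coprime to \(p\) in every family of interest: in odd characteristic \(Z(G)\subseteq\gen*{-1_n}\) is a \(2\)-group, and in characteristic \(2\) the symplectic and orthogonal groups considered here have trivial centre. Therefore the \psylow of \(G\) maps isomorphically onto a \psylow of \(\mathrm{P}G\), and \(\exp_p(\mathrm{P}G)=\exp_p(G)\).

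The main point beyond a lookup of Coxeter numbers is to confirm that Testerman's corollary really gives the exponent of the whole \psylow and not merely a bound on the order of a single element; this is automatic, since the \psylow is a finite \(p\)-group isomorphic to \(U(\f{q})\) for the unipotent radical \(U\) of a Borel subgroup, so its exponent is realised on some unipotent element, to which the corollary applies directly.
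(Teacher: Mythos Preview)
Your approach is essentially the paper's: both reduce the statement to corollary~0.5 of~\cite{testerman}. You add some helpful bookkeeping (matching Coxeter numbers to the values of \(c\) in~\eqref{pq}, and checking that the centre is a \(p'\)-group so that projectivisation is harmless), which the paper leaves implicit.

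There is, however, one genuine omission. Testerman's result rests on the Bala--Carter classification of unipotent classes, which in its original form requires \(p\) to be a \emph{good} prime for the root system. For types \(B_m\), \(C_m\), \(D_m\) the prime \(p=2\) is bad, and this is precisely the case you need for \(\sp{2m}{q}\) and \(\om{\pm}{2m}{q}\) with \(q\) even. The paper closes this gap by invoking Duckworth's extension of Bala--Carter to bad primes~\cite{duckworth}; you should do the same, or at least flag the issue, rather than assert that corollary~0.5 applies without restriction.
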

Note that the original proof of this theorem depends upon the classification of the conjugacy classes of unipotent elements, which requires \(p\) to be ``good'' prime. The result still holds in general, thanks to the extension of Bala-Carter Theorem due to Duckworth~\cite{duckworth}.
\section{The infinite families}\label{casidifficili}

This section covers the remaining four infinite families \((G,M)\) where \(M=N_G(H)\) and \((G,H)\) are as in the following table:
\begin{table}[h]
	\centering
	\begin{tabular}{ccc}
		\toprule
		\(G\) & \(H\) & Remarks \\
		\midrule
		\(\psp{2m}{q}\)		&	\(\om{-}{2m}{q}\)	&	\(m,q\) even	\\
		\(\pom{}{2m+1}{q}\)	&	\(\om{-}{2m}{q}\)	&	\(m\) even, \(q\) odd	\\
		\(\pom{+}{2m}{q}\)	&	\(\om{}{2m-1}{q}\)	&	\(m\) even	\\
		\(\psp{4}{q}\)		&	\(\psp{2}{q^2}\)		&	\\
		\bottomrule	
	\end{tabular}
	\caption{The cases (a)-(d) of \autoref{table}.}
	\label{lietype}
\end{table}

The idea is to compare the exponents of the \psylows of \(G\) and \(H\) using the results in the previous section, and check \(\exp(M)\) when \(\exp(G)\ne\exp(H)\). The \psylows of \(G\) and \(H\) obtained with the constructions presented in the previous section will be called \(S_G\) and \(S_H\) respectively if there could be any ambiguity. Recall also the notation: if \(p\notdivides q\) then \(e=\ord{p}{q}\) and \(q^e=1+p^rx\) with \((x,p)=1\).

\begin{proposition}
	\(\psp{4}{q}\) contains a subgroup with the same exponent if and only if the characteristic of the underlying field is different from 3. An example of such a subgroup is \(\psp{2}{q^2}\rtimes C_2\).
\end{proposition}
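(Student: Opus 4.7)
The plan is to take $M := N_G(\psp{2}{q^2}) \cong \psp{2}{q^2} \rtimes \langle \sigma \rangle$, with $\sigma$ the order-$2$ Galois automorphism of $\f{q^2}/\f{q}$, and to compare $\exp(M)$ with $\exp(G)$ one prime at a time. Since every element of $M$ outside $H := \psp{2}{q^2}$ has the form $(h, \sigma)$ with square $(h\sigma(h), 1)$, its order is twice the order of $h\sigma(h)$; hence $\exp_p(M) = \exp_p(H)$ for every odd prime $p$, while $\exp_2(M) \le 2\exp_2(H)$. Combined with the trivial $\exp_p(M) \mid \exp_p(G)$, this reduces the problem to comparing $\exp_p(G)$ and $\exp_p(H)$ using the propositions of \autoref{capitolopsylow}, plus producing an element of order $2\exp_2(H)$ in $M$ whenever $\exp_2(H) < \exp_2(G)$.

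I would first dispatch the primes for which the $H$-exponent already matches. For odd $p \nmid q$ the only relevant values of $e := \ord{p}{q}$ are $1, 2, 4$; in each case formulas \eqref{recapgl}--\eqref{recapsp1} applied to $G$ yield $\exp_p(G) = p^r$ with $v = 0$, and the same formulas applied to $H$ (using $\ord{p}{q^2} = e/\gcd(e,2)$ and checking that the relevant $p$-adic valuation still equals $r$) give $\exp_p(H) = p^r$. For $p = 2$ with $q$ odd, formula \eqref{pspexp} applied to $G$ with $m = 2 = 2^1$ and $r_t = 2$ yields $\exp_2(G) = 2^s$ (where $2^{s+1}\parallel q^2-1$); applied to $H$ with $m = 1 = 2^0$, $r_t = 1$, and the observation that $2 \parallel q^2+1$ for $q$ odd (so the corresponding parameter becomes $s' = s+1$), it yields $\exp_2(H) = 2^{s'-1} = 2^s$.

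The decisive case is the defining characteristic. Formula \eqref{pq} applied to the Coxeter numbers $c(\psp{4}{q}) = 4$ and $c(\psp{2}{q^2}) = 2$ gives $\exp_p(G) = \min\{p^a : p^a > 3\}$ against $\exp_p(H) = p$. These agree when $p \ge 5$. For $p = 2$ (so $q$ even), $\exp_2(G) = 4$ while $\exp_2(H) = 2$, and I would close the gap inside $M$ by picking $\alpha \in \f{q^2}$ with $\alpha + \alpha^q \neq 0$ and setting $g = \bigl(\begin{smallmatrix} 1 & \alpha \\ 0 & 1 \end{smallmatrix}\bigr)$: then $g\sigma(g)$ is a nontrivial transvection, so $(g, \sigma)$ has order $4$. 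The exceptional case is $p = 3$: here $\exp_3(G) = 9$ while $\exp_3(M) = \exp_3(H) = 3$ by the parity argument of the first paragraph, so $M$ cannot attain the full exponent. Together these show $\exp(M) = \exp(G)$ if and only if $\car \f{q} \neq 3$.

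For the converse in characteristic $3$, I would invoke \autoref{table}: any proper $K < G$ with $\exp(K) = \exp(G)$ satisfies $\pi(K) = \pi(G)$ and must therefore sit inside one of the maximal overgroups listed there. For $q = 3^k$ only family (d) applies (the ancillary candidates $2^4 \rtimes A_5$ and $S_6$ arising from $\psp{4}{3} \cong \uni{4}{2}$ both have order coprime to $9$, so they fail automatically), so $K \le M$, contradicting what we have just shown. The main technical difficulty is the bookkeeping in the odd-prime case with $p \nmid q$, where one has to track both the $p$-adic valuation $r$ and the ``wreath depth'' $v$ under the substitution $q \mapsto q^2$ and check that $r + v$ is invariant across all three possible values of $e$.
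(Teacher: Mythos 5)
Your proposal is correct and follows essentially the same route as the paper: a prime-by-prime comparison of $\exp_p(\psp{2}{q^2})$ with $\exp_p(\psp{4}{q})$ via the Sylow descriptions of \autoref{capitolopsylow}, isolating characteristics $2$ and $3$ as the only defining-characteristic discrepancies, closing the gap in characteristic $2$ with an order-$4$ element $(g,\sigma)$ in the normalizer, and showing the $3$-part cannot be recovered in characteristic $3$. Two small remarks. First, your condition $\alpha+\alpha^q\neq 0$ (i.e.\ $\alpha\notin\f{q}$) is the right one, and is in fact more accurate than the paper's ``$\alpha\ne1$'', which does not suffice when $\alpha\in\f{q}\setminus\{1\}$; likewise your explicit parity argument for $\exp_p(M)=\exp_p(H)$ at odd $p$ is a welcome substitute for the paper's ``clearly''. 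Second, in your treatment of $q=3$ via $\psp{4}{3}\cong\uni{4}{2}$, the claim that $S_6$ has order coprime to $9$ is false ($\lvert S_6\rvert=720=2^4\cdot3^2\cdot5$); the correct observation is that $\exp_3(S_6)=3<9=\exp_3(\uni{4}{2})$ (equivalently $\exp(S_6)=60\ne180$), which is how the paper disposes of this case in \autoref{casifacili}. This slip does not affect the conclusion, since the argument only needs the exponents to differ.
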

\begin{proof}
	Consider first \(p\) odd, \(p\notdivides q\). Since \(p\ne2\), the degree of the groups is too small to require a wreath product in the construction of a \psyloww: using the notation in \ref{spps}, this means that they are of the form \(G_0\) or \(G_0^2\), so \(\exp_p(\psp{4}{q})=\exp_p(\psp{2}{q^2})=p^r\).
	
	Take now \(p=2\) in odd characteristic. By \eqref{pspexp} we have that \(\exp_2(\psp{4}{q})=2^a\)  with \(2^a\parallel q^2-1\), and \(\exp_2(\psp{2}{q^2})=2^{b-1}\) with \(2^b\parallel q^4-1\). But \(q^4-1=(q^2-1)(q^2+1)\), so \(b=a+1\).
	
	Finally, if \(p\mid q\) then by \eqref{pq}
	\begin{equation*}
	\exp_p(\psp{4}{q})=
	\begin{dcases*}
	\exp_p(\psp{2}{q^2}) & if \(p\ne2,\,3\) \\
	p\cdot\exp_p(\psp{2}{q^2}) & if \(p=2,\,3\)
	\end{dcases*}
	\end{equation*}
	This means that the only problem is when the characteristic of the field is 2 or 3, so we consider the normalizer \(\psp{2}{q^2}\rtimes C_2\), which is a maximal subgroup of \(\psp{4}{q}\).
	
	If \(q=3^k\), clearly \(\exp(\psp{2}{q^2})=\exp(\psp{2}{q^2}\rtimes C_2)=\frac{1}{3}\exp(\psp{4}{q})\).
	
	If \(q=2^k\) we have \(\exp(\psp{2}{q^2}\rtimes C_2)=2\cdot\exp(\psp{2}{q^2})=\exp(\psp{4}{q})\). Indeed, consider the automorphism of \(\f{q^2}\) of order 2 given by \(\sigma:x\mapsto x^{2^k}\). The action of \(C_2\) onto \(\psp{2}{q^2}\) is the induced automorphism, which maps each entry of the matrix with its \(2^k\text{-th}\) power.
	
	Recall that the upper unitriangular matrices form a \dsylow and take \(\alpha\in\f{q^2}\), so that \(x=\biggl(\begin{pmatrix}1 & \alpha\\0 & 1\end{pmatrix},\sigma\biggr)\in\psp{2}{q^2}\rtimes C_2\) has order 4 if \(\alpha\ne1\).
\end{proof}

\begin{proposition}
	\(\psp{2m}{q}\) with \(m\ge4\) even and \(q\) even always contains a subgroup with the same exponent. An example of such a subgroup is \(\om{-}{2m}{q}\).
\end{proposition}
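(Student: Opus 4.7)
The plan is to establish \(\exp(\sp{2m}{q}) = \exp(\om{-}{2m}{q})\) by matching the \(p\)-part of the exponent for every \(p \in \pi(\sp{2m}{q})\). Since \(q\) is even, \(\psp{2m}{q} = \sp{2m}{q}\), and \(\om{-}{2m}{q}\) sits in \(\sp{2m}{q}\) as the isometry group of a minus-type quadratic form whose polarisation is the defining symplectic form. Moreover \([\go{-}{2m}{q} : \om{-}{2m}{q}]\) is a power of \(2\) in characteristic~\(2\), so for every odd prime \(p\) the \psylows of \(\om{-}{2m}{q}\) coincide with those of \(\go{-}{2m}{q}\), and the orthogonal Sylow description of \autoref{capitolopsylow} transfers directly.

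For \(p = 2\) I apply \eqref{pq}: the two exponents are \(\min\set{2^a : 2^a \geq 2m}\) and \(\min\set{2^a : 2^a \geq 2m-2}\) respectively. They can disagree only if some \(2^a\) lies in \([2m-2, 2m)\); since every quantity involved is even, this forces \(2^a = 2m-2\), and hence \(m = 2^{a-1}+1\). With \(m\) even the only solution is \(a = 1\) and \(m = 2\), which is excluded by \(m \geq 4\).

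For odd \(p\) we have \(p \nmid q\); set \(e = \ord{p}{q}\) and \(p^r \parallel q^e - 1\). The Sylow description identifies a \psylow of \(\om{-}{2m}{q}\) with one of \(\so{}{2m+1}{q}\) when \(p \mid q^m + 1\), and with one of \(\so{}{2m-1}{q}\) otherwise; in each case I plug this into \eqref{recapsp1} and the parallel \(\so{}{2k+1}{q}\)-formula. In the first branch the condition \(p \mid q^m + 1\) forces \(e\) even and \(m \equiv e/2 \pmod e\); since \(ep^{v+1}\) is even while \(2m+1\) is odd, the intervals \([ep^v, ep^{v+1})\) containing \(2m\) and \(2m+1\) coincide, giving \(\exp_p(\sp{2m}{q}) = \exp_p(\so{}{2m+1}{q})\). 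In the second branch a discrepancy between \(\exp_p(\sp{2m}{q})\) and \(\exp_p(\so{}{2m-1}{q})\) would require the boundary \(ep^v = 2m\) (for \(e\) even) or \(2ep^v = 2m\) (for \(e\) odd); the first yields \(m = (e/2)p^v\), which is odd when \(e \equiv 2 \pmod 4\) and otherwise satisfies \(m \equiv e/2 \pmod e\) (putting us back in the first branch), while the second yields \(m = ep^v\), odd since \(e\) and \(p\) are both odd. Each possibility contradicts either the evenness of \(m\) or the hypothesis of the current branch.

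The main obstacle is this parity/congruence bookkeeping for odd \(p\): one has to verify that every value of \(m\) at which the \(\sp\)- and \(\so\)-exponent formulas could diverge is ruled out by combining the evenness of \(m\) with the defining congruence of the branch, handling \(e\) odd, \(e \equiv 2 \pmod 4\), and \(4 \mid e\) separately. Once this is done, \(\exp_p(\sp{2m}{q}) = \exp_p(\om{-}{2m}{q})\) for every prime \(p\), yielding \(\exp(\om{-}{2m}{q}) = \exp(\sp{2m}{q}) = \exp(\psp{2m}{q})\).
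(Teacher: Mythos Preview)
Your proof is correct and follows essentially the same strategy as the paper: compare \(\exp_p\) of \(\sp{2m}{q}\) and \(\om{-}{2m}{q}\) prime by prime, using \eqref{pq} for \(p=2\) and the Sylow identifications \(\so{}{2m\pm1}{q}\) for odd \(p\) according to whether \(p\mid q^m+1\). The only differences are cosmetic: you note at the outset that \(p\mid q^m+1\) forces \(e\) even (so the paper's \(e\)-odd sub-case there is vacuous), and in the second branch with \(e\) even you dispose of the boundary \(2m=ep^v\) by splitting on \(e\bmod 4\), whereas the paper instead deduces \(e\mid m\) from \(p\mid q^m-1\) and gets \(2(m/e)=p^v\) with \(p\) odd. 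Both routes reach the same contradiction.
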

\begin{proof}
	Consider \(p=2\): by \eqref{pq}, \(\exp_2(\psp{2m}{q})=\min\set{2^a\mid 2^a>2m-1}\) and \(\exp_2(\om{-}{2m}{q})=\min\set{2^a\mid 2^a>2m-3}\), so they're different iff \(2m-3<2^a\le 2m-1\) for some \(a\). This would imply \(2^{a-1}=m-1\), but \(m\) is even and \(m\ge4\) so it can't occur.
	
	Take now \(p\) odd, then \(S_G\) is isomorphic to a \psylow of \(\sp{2m}{q}\) and \(S_H\) to a \psylow of \(\so{-}{2m}{q}\). There are essentially two different situations, depending on \(\bigl(p,q^m+1\bigr)\).
	
	If \(p\mid q^m+1\), a \psylow of \(\so{-}{2m}{q}\) is isomorphic to a \psylow of \(\so{}{2m+1}{q}\):
	\begin{itemize}
		\item if \(e\) is even, \(S_H\) and \(S_G\) are \psylows of \(\gl{2m+1}{q}\) and \(\gl{2m}{q}\), respectively, so they are isomorphic, otherwise we would have \(\exp_p(\gl{2m+1}{q})<\exp_p(\gl{2m}{q})\). Indeed, notice that \(\size{\gl{2m+1}{q}}=\size{\gl{2m}{q}}\cdot(q^{2m+1}-1)\), but \(e\) is even so \(p\notdivides q^{2m+1}-1\), and the order of their \psylows is the same;
		\item if \(e\) is odd, \(S_H\) is isomorphic to a \psylow of \(\sp{2m}{q}\).
	\end{itemize}
	If \(p\notdivides q^m+1\), a \psylow of \(\so{-}{2m}{q}\) is isomorphic to a \psylow of \(\so{}{2m-1}{q}\):
	\begin{itemize}
		\item if \(e\) is even, \(S_H\) and \(S_G\) are \psylows of \(\gl{2m-1}{q}\) and \(\gl{2m}{q}\), respectively. Since \(\size{\gl{2m}{q}}=\size{\gl{2m-1}{q}}\cdot(q^{2m}-1)\), they have the same \(p\text{-power}\) part in the exponent, unless \(p\mid q^m-1\) and \(2m=ep^i\) (see \eqref{recapgl}). This can't happen, since \(p\mid q^m-1\) implies \(e\mid m\), then \(2(m/e)=p^i\) but \(p\) is odd;
		\item if \(e\) is odd, \(S_H\) is a \psylow of \(\sp{2m-2}{q}\). Like in the previous case, the exponent may be different if \(2m=2ep^i\), but \(m\) is even while both \(e\) and \(p\) are odd.\qedhere
	\end{itemize}
\end{proof}

\begin{proposition}\label{difficile3}
	\(\pom{}{2m+1}{q}\) with \(m\ge4\) even and \(q\) odd always contains a subgroup with the same exponent, unless \(p^a=2m-1\) for some \(a\), where \(p\) is the characteristic of the underlying field. An example of such a subgroup is \(\om{-}{2m}{q}\).
\end{proposition}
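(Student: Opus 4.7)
I would follow the same template as the two preceding propositions: compare $\exp_\ell$ of $G=\pom{}{2m+1}{q}$ and $H=\om{-}{2m}{q}$ one prime $\ell$ at a time, using the Sylow descriptions in \autoref{capitolopsylow}. The characteristic case will account for the stated exception; all other primes should give equal $\ell$-parts.

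First I would handle $\ell=2$: by \eqref{recapom1} and \eqref{recapom2}, both $\exp_2(G)$ and $\exp_2(H)$ equal $2^{s+r_t-1}$ or $2^{s+r_t-2}$ according to whether $m$ is a power of $2$, a condition that is the same for both groups since it depends only on $m$. The quantity $r_t$ is determined by the dyadic interval containing the degree, so it can change only if $2m$ and $2m+1$ lie in different intervals, i.e.\ $2m+1=2^{r_t+1}$; but $2m+1$ is odd, so this never happens and the $2$-parts agree.

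Next I would treat $\ell$ odd with $\ell\nmid q$, setting $e=\ord_\ell(q)$ and $\ell^r\parallel q^e-1$. By the orthogonal-group reduction recalled in \autoref{capitolopsylow}, if $\ell\mid q^m+1$ then $S_H$ is a Sylow $\ell$-subgroup of $\so{}{2m+1}{q}$, hence isomorphic to $S_G$, and nothing needs to be shown. If $\ell\nmid q^m+1$, then $S_H$ is a Sylow $\ell$-subgroup of $\so{}{2m-1}{q}$, and I would split on the parity of $e$. For $e$ even, \eqref{recapgl} gives equal exponents unless some $e\ell^v\in\{2m,2m+1\}$; only $e\ell^v=2m$ is possible (parity), and writing $m=(e/2)\ell^v$ shows $e\nmid m$ while $e\mid 2m$, forcing $q^m\equiv -1\pmod\ell$, contradicting $\ell\nmid q^m+1$. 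For $e$ odd, \eqref{recapsp1} gives equal exponents unless $e\ell^v=m$, which is odd while $m$ is even --- a contradiction.

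Finally, for $\ell=p$ (the characteristic) I would apply \eqref{pq}: since $G$ has $c=2m$ and $H$ has $c=2m-2$, one gets $\exp_p(G)=\min\{p^a\mid p^a>2m-1\}$ and $\exp_p(H)=\min\{p^a\mid p^a>2m-3\}$. These can disagree only when some $p^a\in\{2m-2,2m-1\}$; as $p$ is odd, the only possibility is $p^a=2m-1$, which is precisely the exceptional case excluded in the statement. Whenever $p^a\neq 2m-1$ for every $a$, combining the three cases gives $\exp(H)=\exp(G)$, so $\om{-}{2m}{q}$ itself is a witness. The main (though still routine) obstacle is the small modular arithmetic in the $e$-even subcase, where one must rule out the simultaneous occurrence of $e\ell^v=2m$ and $\ell\nmid q^m+1$; everything else is bookkeeping on the Sylow formulas.
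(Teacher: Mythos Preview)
Your argument for the positive direction is correct and follows the paper's approach closely; in fact your treatment of the $e$-even subcase is slightly cleaner than the paper's, which splits into two further subcases according to whether $\ell\mid q^m-1$, whereas you observe directly that $e\mid 2m$ together with $e\nmid m$ forces $q^m\equiv -1\pmod\ell$.

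However, you have not addressed the \emph{negative} content of the statement: the ``unless'' clause is meant bidirectionally, and the paper explicitly proves that when $p^a=2m-1$ for some $a$, the group $\pom{}{2m+1}{q}$ has \emph{no} proper subgroup with the same exponent. Showing that $H=\om{-}{2m}{q}$ fails in this case is not enough; one must pass to the maximal subgroup $M=N_G(H)$ containing it. The paper identifies $M=\k{-}{2m}{q}=\om{-}{2m}{q}.2$ and notes that \eqref{pq} depends only on the type of the group, so $\exp_p(M)=\exp_p(H)<\exp_p(G)$. Combined with the fact (from \autoref{table}) that $M$ is the only maximal subgroup of $G$ with $\pi(M)=\pi(G)$, this rules out all candidates. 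You should add this step; without it the exceptional case is only shown to defeat the specific witness $H$, not to defeat every possible subgroup.
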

\begin{proof}
	\(\exp_2(\pom{}{2m+1}{q})=\exp_2(\om{-}{2m}{q})\) follows immediately from \eqref{recapom1} and \eqref{recapom2}.
	
	Take now \(p\) odd, \(p\notdivides q\). Since \(p\) is odd, \(S_G\) is isomorphic to a \psylow of \(\so{}{2m+1}{q}\) and \(S_H\) to a \psylow of \(\so{-}{2m}{q}\). If \(p\mid q^m+1\), a \psylow of \(\so{-}{2m}{q}\) is isomorphic to a \psylow of \(\so{}{2m+1}{q}\). If \(p\notdivides q^m+1\), it's isomorphic to a \psylow of \(\so{}{2m-1}{q}\).
	
	If \(e\) is odd, \(S_H\) and \(S_G\) are \psylows of \(\sp{2m-2}{q}\) and \(\sp{2m}{q}\), respectively, so they may not have the same exponent if \(2m=2ep^i\), but \(m\) is even while \(e\) and \(p\) are odd.
	
	If \(e\) is even, \(S_H\) and \(S_G\) are \psylows of \(\gl{2m-1}{q}\) and \(\gl{2m+1}{q}\), respectively. They may not have the same exponent if \(2m+1=ep^i+1\) (see \eqref{recapgl}), but this can't happen:
	\begin{itemize}
		\item if \(p\mid q^m-1\), \(e\mid m\) and \(p\) is odd;
		\item if \(p\notdivides q^m-1\), call \(e=2f\) so that \(m=fp^i\) and \(q^{2f}\equiv1\bmod p\). Now \(q^{m}\equiv q^{fp^i}\equiv q^f\equiv\pm1\bmod p\), which means that either \(p\mid q^m-1\) or \(p\mid q^m+1\), contradiction.
	\end{itemize}
	Finally, consider \(p\mid q\). By \eqref{pq}, \(\exp_p(\pom{}{2m+1}{q})=\min\Set{p^a|p^a>2m-1}\) and \(\exp_p(\om{-}{2m}{q})=\min\Set{p^a|p^a>2m-3}\), so they're different iff \(2m-3<p^a\le2m-1\), i.e. \(p^a=2m-1\), for some \(a\). If that happens, \(\exp_p(\pom{}{2m+1}{q})=p\cdot\exp_p(\om{-}{2m}{q})\).
	
	\(N_{\pom{}{2m+1}{q}}(\om{-}{2m}{q})=\k{-}{2m}{q}\). Indeed, it's known that \(\go{-}{2m}{q}\times\go{}{1}{q}\simeq\go{-}{2m}{q}\times C_2\) is a maximal subgroup of \(\go{}{2m+1}{q}\). Taking the kernel of the determinant and of the spinor norm, we get that \(\k{-}{2m}{q}=\om{-}{2m}{q}.2\) is a maximal subgroup of \(\om{}{2m+1}{q}=\pom{}{2m+1}{q}\), and the only one containing \(\om{-}{2m}{q}\).
	
	Since the formula in \eqref{pq} depends only on the ``type'' of group, in this case orthogonal, then \(\exp_p(\k{-}{2m}{q})=\exp_p(\om{-}{2m}{q})\) for \(p\mid q\), therefore \(\pom{}{2m+1}{q}\) doesn't contain a subgroup with the same exponent if \(p^a=2m-1\).
\end{proof}

\begin{proposition}
	\(\pom{+}{2m}{q}\) with \(m\ge4\) even always contains a subgroup with the same exponent. An example of such a subgroup is \(\om{}{2m-1}{q}\).
\end{proposition}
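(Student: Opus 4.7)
The plan is to show that $\exp_p(\pom{+}{2m}{q})=\exp_p(\om{}{2m-1}{q})$ for every prime $p$, splitting into the three usual cases $p=2$ with $q$ odd, $p$ odd with $p\notdivides q$, and $p\mid q$, and using the Sylow descriptions of \autoref{capitolopsylow}.

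First, consider $p\mid q$. By \eqref{pq}, the orthogonal-type formula gives $c=2m-2$ for $\pom{+}{2m}{q}$, and for $\om{}{2m-1}{q}=\om{}{2(m-1)+1}{q}$ the odd-dimensional formula also gives $c=2(m-1)=2m-2$. Hence both exponents equal $\min\set{p^a\mid p^a>2m-3}$ and we are done immediately. Next, for $p=2$ with $q$ odd, I apply \eqref{recapom1} and \eqref{recapom2}: for $\pom{+}{2m}{q}$ the relevant $r_t$ satisfies $2^{r_t}\le 2m<2^{r_t+1}$, while for $\om{}{2m-1}{q}$ the relevant $r_t'$ satisfies $2^{r_t'}\le 2m-1<2^{r_t'+1}$. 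A small case split on whether $m$ is a power of $2$ takes care of things: when $m=2^k$ we get $r_t=k+1$, $r_t'=k$, and \eqref{recapom2} uses the $m=2^k$ branch while \eqref{recapom1} uses the $m-1\ne 2^k$ branch (since $2^k-1$ is not a power of $2$ for $k\ge 1$), so both exponents equal $2^{s+k-1}$; when $m\ne 2^k$, $r_t=r_t'$ and $m-1$ is odd and $\ne 1$ (as $m\ge 4$), so neither is a power of $2$ and both exponents equal $2^{s+r_t-1}$.

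The main case is $p$ odd with $p\notdivides q$. By the orthogonal-group discussion in \autoref{capitolopsylow}, a \psylow of $\so{+}{2m}{q}$ is already a \psylow of $\so{}{2m+1}{q}$ if $p\mid q^m-1$, and of $\so{}{2m-1}{q}$ otherwise, while a \psylow of $\om{}{2m-1}{q}=\pom{}{2m-1}{q}$ is a \psylow of $\so{}{2m-1}{q}$. If $p\notdivides q^m-1$ the two Sylow subgroups are the same group and there is nothing to prove, so I focus on $p\mid q^m-1$, where I must show $\exp_p(\so{}{2m+1}{q})=\exp_p(\so{}{2m-1}{q})$. Using \eqref{recapgl} and \eqref{recapsp1} (via the description of \psylows of the odd-dimensional orthogonal groups), equality fails only if a value of $ep^v$ lies in the gap between the two degrees: with $e$ even one would need $ep^v=2m$, forcing $m=(e/2)p^v$, but combined with $e\mid m$ (which follows from $p\mid q^m-1$) this forces $2\mid p^v$, impossible for $p$ odd; with $e$ odd one would need $ep^v=m$, contradicting the fact that $m$ is even while $e$ and $p$ are odd.

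The main obstacle is the careful bookkeeping in the mixed case $p\mid q^m-1$ with $p$ odd, exactly because one has to ensure that the two different Sylow descriptions (coming from $\so{}{2m+1}{q}$ versus $\so{}{2m-1}{q}$) really yield the same $p$-part of the exponent; the argument rests on the parity incompatibility between $m$ being even and the equations $ep^v=2m$ or $ep^v=m$ with $e,p$ constrained as above, which is the same kind of parity trick already used in \autoref{difficile3}. Since no exceptional pair $(p,m)$ survives, the subgroup $\om{}{2m-1}{q}$ always has the same exponent as $\pom{+}{2m}{q}$.
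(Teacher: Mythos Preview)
Your proof is correct and follows essentially the same route as the paper's: the same three-way split on $p$, the same reduction for $p$ odd, $p\notdivides q$ to the case $p\mid q^m-1$, and the same parity obstruction ruling out $ep^v=2m$ (for $e$ even) and $ep^v=m$ (for $e$ odd). You are in fact more explicit than the paper in two places: you spell out the $p=2$ case-split on whether $m=2^k$ (the paper merely says this ``follows immediately'' from \eqref{recapom1} and \eqref{recapom2}), and you write out the parity argument that the paper delegates to the proof of \autoref{difficile3}.
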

\begin{proof}
	If \(q\) is odd, \(\exp_2(\pom{+}{2m}{q})=\exp_2(\om{}{2m-1}{q})\) follows immediately from \eqref{recapom1} and \eqref{recapom2}.
	
	If \(p\mid q\), \(\exp_p(\pom{+}{2m}{q})=\exp_p(\om{}{2m-1}{q})\) follows from \eqref{pq} since \(c=2m-2\) for both groups.
	
	Take now \(p\) odd, \(p\notdivides q\). Since \(p\) is odd, \(S_G\) is isomorphic to a \psylow of \(\so{+}{2m}{q}\) and \(S_H\) to a \psylow of \(\so{}{2m-1}{q}\). If \(p\notdivides q^m-1\), a \psylow of \(\so{+}{2m}{q}\) is isomorphic to a \psylow of \(\so{}{2m-1}{q}\). If \(p\mid q^m-1\), it's isomorphic to a \psylow of \(\so{}{2m+1}{q}\), and the situation is the same as in the proof of \ref{difficile3}.
\end{proof}
\section{Alternating Groups}\label{alternante}

As described in~\cite{doctabella}, if the alternating group \(A_n\) contains a subgroup \(M\) with the same exponent, then \(A_k\unlhd M\le\left(S_k\times S_{n-k}\right)\cap A_n\). From now on, suppose \(k\ge n-k\), i.e. \(k\ge n/2\). Our claim is
\begin{proposition}
	The alternating group \(A_n\) (\(n\ge5\)) doesn't contain a subgroup with the same exponent iff either \(n=10\), \(n=p^r\) with \(p\) odd prime, or \(n=p_f+1\) where \(p_f\) is a Fermat prime.
\end{proposition}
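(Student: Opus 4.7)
The plan is to compute \(\exp(A_n)\) prime by prime and then, for every candidate subgroup \(M\) with \(A_k\unlhd M\le(S_k\times S_{n-k})\cap A_n\) and \(k\ge n/2\), to compare \(\exp(M)\) against \(\exp(A_n)\) one prime at a time. First I would record that for an odd prime \(p\) a \(p^a\)-cycle is an even permutation, so \(\exp_p(A_n)=p^a\) with \(p^a\le n<p^{a+1}\); whereas for \(p=2\) a \(2^a\)-cycle is odd and the minimal support of an element of \(A_n\) of order \(2^a\) is a pair \((2^a,2)\), giving \(\exp_2(A_n)=2^a\) with \(2^a+2\le n<2^{a+1}+2\). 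Moreover any element of \((S_k\times S_{n-k})\cap A_n\) of order \(p^a\) must contain a \(p^a\)-cycle, which lies on a single side, and since \(k\ge n-k\) this forces \(p^a\le k\).

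For the \emph{only if} direction I would dispose of the three exceptional families in turn. If \(n=p^r\) with \(p\) an odd prime, then \(\exp_p(A_n)=n\) but any \(k<n\) gives \(\exp_p(S_k\times S_{n-k})<n\), so no \(M\) can match. If \(n=p_f+1\) with \(p_f=2^{2^m}+1\) a Fermat prime, the largest odd prime power not exceeding \(n\) is \(p_f=n-1\), forcing \(k=n-1\) and hence \(M\le A_{n-1}=A_{p_f}\); but then \(n=2^{2^m}+2\) yields \(\exp_2(A_n)=2^{2^m}\) while \(\exp_2(A_{p_f})=2^{2^m-1}\), a contradiction. Finally if \(n=10\), the largest odd prime power \(\le 10\) is \(9\), again forcing \(k=9\) and \(M\le A_9\), and \(\exp_2(A_9)=4<8=\exp_2(A_{10})\) rules this out.

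For the \emph{if} direction, assume \(n\) is not in any exception; I would produce \(M\) in two cases. If \(n\ne 2^r+2\), the candidate \(M=A_{n-1}\) already works: the odd \(p\)-parts agree because \(n\) is not an odd prime power (the largest \(p^a\le n\) is then \(\le n-1\)), and the \(2\)-parts agree because \(n\ne 2^r+2\) is equivalent to \(2^a+2\le n-1\) for the relevant \(a\). If instead \(n=2^r+2\), I take \(M=(S_{n-2}\times S_2)\cap A_n\), which is isomorphic to \(S_{n-2}\) via the graph of the sign homomorphism and contains \(A_{n-2}\) as a normal subgroup. Its \(2\)-part is \(2^r=\exp_2(A_n)\), realised by a \(2^r\)-cycle on the \(n-2=2^r\) points of the first factor paired with the transposition in \(S_2\); and its odd \(p\)-parts match those of \(A_n\) unless some odd prime power hits \(n-1\) or \(n\). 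Since \(n\) is even only \(n-1=2^r+1\) is suspect, and having excluded both the Fermat case and \(n=10\), the Catalan--Mih\u{a}ilescu theorem ensures that \(2^r+1\) is not an odd prime power.

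The step I expect to be the main obstacle is the careful handling of \(p=2\): because even-length cycles are odd permutations, \(\exp_2(A_n)\) has a piecewise behaviour that takes a special hit exactly when \(n=2^r+2\), and this is precisely what ties together the two non-odd-prime-power exceptions (\(n=p_f+1\) and \(n=10\), both of the form \(2^r+2\) with \(n-1\) an odd prime power) and what motivates the switch from the obvious candidate \(A_{n-1}\) to the twisted copy of \(S_{n-2}\) sitting inside \(A_n\). Once this behaviour at \(p=2\) is made explicit, everything else reduces to prime-by-prime bookkeeping.
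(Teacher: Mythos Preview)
Your proof is correct and follows essentially the same route as the paper: compute \(\exp_p(A_n)\) separately for odd \(p\) and for \(p=2\), use \(A_{n-1}\) or \((S_{n-2}\times S_2)\cap A_n\) as the witness according to whether \(n=2^r+2\), and finish by identifying the exceptional overlap \(n-1=2^r+1=p^s\). The only imprecision is your appeal to Catalan--Mih\u{a}ilescu at the end: that theorem handles only the case \(s\ge2\) (yielding \(n=10\)); for \(s=1\) you need the separate elementary fact that \(2^r+1\) prime forces \(r\) to be a power of \(2\), i.e.\ \(n-1\) is a Fermat prime, which the paper states explicitly.
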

Consider \(p\le n\) odd, then \(\exp_p(A_n)=p^t\), where \(p^t\le n<p^{t+1}\): take for example a \(p^t\)-cycle.

If \(n=p^t\) then \(\exp_p(M)\le\exp_p(S_k\times S_{n-k})=\exp_p(S_k)<p^t\), since \(k<n\). Conversely, if \(n\ne p^t\) we can take \(k=n-1\) and get \(\exp_p(A_{n-1})=\exp_p(A_n)\).

Consider then \(p=2\). Since a \(2^t\text{-cycle}\) doesn't belong to \(A_n\), an element of maximal even order needs at least another disjoint transposition, which implies that \(\exp_2(A_n)=2^t\) where \(2^t+2\le n<2^{t+1}+2\). If \(n\ne 2^t+2\) then \(\exp_2(A_{n-1})=\exp_2(A_n)\). Conversely, if \(n=2^t+2\) then take \(k=n-2\) and get \(\exp_2(A_n)=\exp_2((S_{n-2}\times S_2)\cap A_n)\).

Therefore:
\begin{itemize}
	\item if \(n\ne p^r\) and \(n\ne2^r+2\), then \(\exp(A_n)=\exp(A_{n-1})\);
	\item if \(n=p^r\), then \(A_n\) can't have a subgroup with the same exponent;
	\item if \(n=2^r+2\) and \(n\ne p^s+1\) for any \(p\) odd, then \(\exp(A_n)=\exp((S_{n-2}\times S_2)\cap A_n)\);
	\item if \(n=2^r+2\) and \(n=p^s+1\) for some \(p\) odd, then \(A_n\) can't have a subgroup with the same exponent, since \(M\le(S_{n-2}\times S_2)\cap A_n\) and \(M\le A_{n-1}\) imply \(M\le A_{n-2}\), hence \(\exp_q(M)<\exp_q(A_n)\) for both \(q=2\) and \(q=p\), as seen above.
\end{itemize}
In particular, \(A_n\) doesn't contain a subgroup with the same exponent if and only if either \(n=p^r\) with \(p\) odd prime, or \(n=2^r+2=p^s+1\) with \(p\) odd prime.

This last condition is realized when \(2^r+1=p^s\). If \(s=1\), then \(2^r+1\) is a prime number iff it's a Fermat prime (\(r=1\) is irrelevant since \(n\ge5\)). If \(s>1\), the only possibility is \(r=p=3\) and \(s=2\), i.e. \(n=10\), thanks to Mih\u{a}ilescu's theorem~\cite{mihailescu}.
\section{Other groups}\label{casifacili}

This section covers all the remaining pairs \((G,M)\) deduced from~\cite{doctabella}. Since here the possible \(M\) are explicitly listed, they can be studied computationally to get the following result:
\begin{proposition}
	Consider the pairs \((G,M)\) in \autoref{table} not labelled \emph{(a)-(e)}. The only pairs which have \(\exp(G)=\exp(M)\) are the following:
	\begin{equation*}
	(\hs,\m{22}),\qquad(\m{12},\m{11}),\qquad(\m{24},\m{23}).
	\end{equation*}
\end{proposition}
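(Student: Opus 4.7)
The plan is to reduce the problem to a finite table lookup. For every pair \((G,M)\) in \autoref{table} outside families (a)--(e), both \(G\) and \(M\) appear in the \textsc{Atlas of Finite Groups}, where the full list of conjugacy classes (and hence element orders) of each group is tabulated. Since the exponent of a finite group is the least common multiple of its element orders, \(\exp(G)\) and \(\exp(M)\) can be read off directly. Moreover, because the pairs in \autoref{table} already satisfy \(\pi(G)=\pi(M)\), the check \(\exp(G)=\exp(M)\) decomposes prime by prime into comparisons \(\exp_p(G) = \exp_p(M)\) for \(p\in\pi(G)\), which further reduces the bookkeeping.

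The key steps, carried out pair by pair, are: (i) list the element orders of \(G\) and of \(M\); (ii) compute \(\exp(G)\) and \(\exp(M)\) as lcms; (iii) either declare equality, or exhibit a prime \(p\) where \(\exp_p(M) < \exp_p(G)\). For the three claimed positive pairs, the verification is immediate: for instance \(\exp(\m{24}) = 2^3\cdot 3\cdot 5\cdot 7\cdot 11\cdot 23 = \exp(\m{23})\), and analogous equalities hold for \((\hs,\m{22})\) and \((\m{12},\m{11})\). For the remaining pairs one always finds a discriminating prime; typically it is the defining characteristic or the largest prime in \(\pi(G)\). For example, \(\exp(\l{2}{5}) = 30\) while \(\exp(A_6)=60\) (the element of order \(4\) in \(A_6\) has no counterpart in \(A_5\)); \(\exp(\l{2}{11}) = 330\) against \(\exp(\m{11})=1320\) (missing an element of order \(8\)); and similarly \(\exp(S_8)\) falls short of \(\exp(\sp{6}{2})\) by missing an element of order \(15\).

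For the larger cases \((\pom{+}{8}{2},P_i)\) and \((U_6(2),\m{22})\), a direct comparison of element orders from the \textsc{Atlas} does the job; the parabolic subgroups of \(\pom{+}{8}{2}\) and \(\l{6}{2}\) can alternatively be analysed by noting that their unipotent radicals are elementary abelian or of small class, so that their exponents are bounded by small powers of \(2\), immediately ruling out equality. Where convenient, one can cross-check with a computer algebra system such as \textsc{Gap} or \textsc{Magma}, whose character-table libraries return the element orders of all groups involved.

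The main obstacle is therefore not conceptual but organisational: twenty pairs must be verified, and although each individual check is a one-line lookup, assembling the data compactly enough to be transparent takes some care. A short table listing, for each failing pair, the offending prime \(p\) together with \(\exp_p(G)\) and \(\exp_p(M)\), together with a direct equality computation for the three positive pairs, makes the proof both complete and easily verifiable.
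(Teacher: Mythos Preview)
Your approach is essentially the paper's own: a finite case-by-case computation, with most pairs dispatched by direct lookup (the paper uses \textsc{Gap} and presents the resulting table). The three positive pairs and most negative ones are handled identically.

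However, your treatment of the parabolic cases contains a genuine error. You suggest that for the parabolics \(P_i\) in \(\pom{+}{8}{2}\) and \(\l{6}{2}\), bounding the exponent of the unipotent radical by a small power of \(2\) ``immediately rul[es] out equality''. It does not: the discriminating prime is \(3\), not \(2\). For instance, \(P_1 \cong 2^6 \rtimes A_8\) in \(\pom{+}{8}{2}\): here \(\exp_2(A_8)=4\) and the radical is elementary abelian, so elements of the semidirect product can have \(2\)-part up to \(8 = \exp_2(\pom{+}{8}{2})\); there is no contradiction at \(p=2\). Likewise \(\exp_2(\l{5}{2}) = 8 = \exp_2(\l{6}{2})\), so the \(2\)-part gives nothing for \(\l{6}{2}\) either. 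What actually fails is \(p=3\): \(\exp_3(\pom{+}{8}{2})=9\) while \(\exp_3(A_8)=3\), and since the unipotent radical is a \(2\)-group it cannot raise the \(3\)-part. The paper's argument is exactly this: writing \(M = 2^k \rtimes H\), one has \(\exp_3(M) = \exp_3(H)\), and in each of the three relevant cases---including \((\uni{4}{2},\, 2^4\rtimes A_5)\), which you do not mention---one checks that \(\exp(G)/\exp(H)\) is divisible by \(3\).

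Your example for \((\sp{6}{2}, S_8)\) has the same slip: \(S_8\) \emph{does} contain an element of order \(15\) (a \(3\)-cycle times a \(5\)-cycle on \(3+5=8\) points); what it lacks is an element of order \(9\), so again the discriminating prime is \(3\).
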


\begin{proof}
	Most of the cases can be easily computed (we used GAP), the results are shown in \autoref{expcalc} below.
	\begin{table}[ht]
		\centering
		\begin{tabular}{lr@{\hskip 7ex}lr}
			\toprule
			\(G\) & \(\exp(G)\) & \(M\) & \(\exp(M)\) \\
			\midrule
			\(A_6\)			&	60		&	\(\l{2}{5}\)	&	30		\\
			\(\uni{3}{3}\)	&	168		&	\(\l{2}{7}\)	&	84		\\
			\(\uni{3}{5}\)	&	840		&	\(A_7\)			&	420		\\
			\(\uni{4}{2}\)	&	180		&	\(S_6\)			&	60		\\
			\(\uni{4}{3}\)	&	2520	&	\(\l{3}{4}\)	&	420		\\
			&			&	\(A_7\)			&	420		\\
			\(\uni{5}{2}\)	&	3960	&	\(\l{2}{11}\)	&	330		\\
			\(\uni{6}{2}\)	&	27720	&	\(\m{22}\)		&	9240	\\
			\(\psp{4}{7}\)	&	4200	&	\(A_7\)			&	420		\\
			\(\sp{6}{2}\)	&	2520	&	\(S_8\)			&	840		\\
			\(\pom{+}{8}{2}\) &	2520	&	\(A_9\)			&	1260	\\
			\(G_2(3)\)		&	6552	&	\(\l{2}{13}\)	&	546		\\
			\(\prescript{2}{}{F_4(2)'}\)	&	3120	&	\(\l{2}{25}\)	&	780	\\
			\(\m{11}\)		&	1320	&	\(\l{2}{11}\)	&	330		\\
			\(\m{12}\)		&	1320	&	\(\m{11}\)		&	1320	\\
			\(\m{24}\)		&	212520	&	\(\m{23}\)		&	212520	\\
			\(\hs\)			&	9240	&	\(\m{22}\)		&	9240	\\
			\(\mcl\)		&	27720	&	\(\m{22}\)		&	9240	\\
			\(\co{2}\)		&	1275120	&	\(\m{23}\)		&	212520	\\
			\(\co{3}\)		&	637560	&	\(\m{23}\)		&	212520	\\	
			\bottomrule
		\end{tabular}
		\caption{Exponent evaluation for the remaining cases of \autoref{table}.}
		\label{expcalc}
	\end{table}
	Therefore \(\hs\), \(\m{12}\), \(\m{24}\) have a subgroup with the same exponent, and the others don't, since the listed subgroups are maximal.
	
	Notice that some pairs \((G,M)\) aren't included because \(M\) missing from the ATLAS database (i.e. there isn't an explicit set of generators to make computations with):
	\begin{equation*}
	(\uni{4}{2},2^4\rtimes A_5),\qquad(\l{6}{2},2^5\rtimes\l{5}{2}),\qquad(\pom{+}{8}{2},2^6\rtimes A_8).
	\end{equation*}
	They can all be excluded using the same argument: call \(M=2^k\rtimes H\), then \(\exp(G)=3\cdot n\cdot\exp(H)\) where \(n=1,2\). Then it's clear that \(\exp(M)\) is also missing a factor 3, since \(M\) is obtained by taking the semidirect product of \(H\) with a \(2\text{-group}\).
\end{proof}

\begin{help}
	I would like to thank my supervisor, Professor Andrea Lucchini, who inspired this work and helped me finding some key references.
\end{help}

\printbibliography\label{lastpage}

\end{document}